\documentclass[leqno,12pt]{amsart} 
\setlength{\textheight}{23cm}
\setlength{\textwidth}{16cm}
\setlength{\oddsidemargin}{0cm}
\setlength{\evensidemargin}{0cm}
\setlength{\topmargin}{0cm}
\usepackage{amssymb}
%
%
%
\theoremstyle{plain} 
\newtheorem{theorem}{\indent\sc Theorem}[section]
\newtheorem{lemma}[theorem]{\indent\sc Lemma}

\newtheorem{proposition}[theorem]{\indent\sc Proposition}

\theoremstyle{definition} 
\newtheorem{definition}[theorem]{\indent\sc Definition}
\newtheorem{remark}[theorem]{\indent\sc Remark}
\newtheorem{example}[theorem]{\indent\sc Example}

%

%


\usepackage[active]{srcltx}
\usepackage{pdfsync}
\usepackage{tabularx} 
\usepackage{arydshln}

\newcommand{\CC}{\mathbb{C}}

\newcommand{\FF}{\mathbb{F}}
\newcommand{\ZZ}{\mathbb{Z}}
\newcommand{\HH}{\mathbb{H}}
\newcommand{\OO}{\mathbb{O}}
\newcommand{\RR}{\mathbb{R}}

\newcommand{\GL}{\mathrm{GL}}

\newcommand{\Sp}{\mathrm{Sp}}
\newcommand{\SO}{\mathrm{SO}}
\newcommand{\SU}{\mathrm{SU}}

\renewcommand{\hom}{\mathrm{Hom}}

\def\dim{\mathop{\hbox{\rm dim}}}
\def\tr{\mathop{\rm tr}}
  
\newcommand{\spf}{\mathfrak{sp}}
\newcommand{\der}{\mathfrak{der}}
\newcommand{\hol}{\mathfrak{hol}}
\newcommand{\inder}{\mathfrak{inder}}
\newcommand{\suf}{\mathfrak{su}}
\newcommand{\sof}{\mathfrak{so}}
\newcommand{\slf}{\mathfrak{sl}}
\newcommand{\gl}{\mathfrak{gl}}
\newcommand{\ad}{\mathop{\mathrm{ad}}}
\newcommand{\id}{\mathrm{id}}
\newcommand{\mm}{\mathfrak{m}}
\newcommand{\hh}{\mathfrak{h}}
\newcommand{\g}{\mathfrak{g}}

\title[  3-Sasakian  manifolds and symplectic triple systems]{  Holonomy and 3-Sasakian homogeneous manifolds \\versus symplectic triple systems}

\author[C.~Draper]{ 
Cristina Draper Fontanals${}^*$ 
}  

\subjclass[2010]{Primary  
53C29.   	 
Secondary 
53C05;  
53C30;        	  	
  53C25.   	   
}
\keywords{Invariant affine connection, homogeneous manifold, 3-Sasakian structure, holonomy algebra, symplectic triple system, curvature}
\thanks{${}^*$ Supported by the Spanish Ministerio de Econom\'{\i}a y Competitividad---Fondo Europeo de
Desarrollo Regional (FEDER) MTM2016-76327-C3-1-P, and by the Junta de Andaluc\'{\i}a grant FQM-336, with FEDER funds}

\address{%
Departamento de Matem\'{a}tica Aplicada,  Escuela de Ingenier\'\i as Industriales,\endgraf
Universidad de M\'{a}laga, 
 29071 M\'{a}laga,  
Spain
}
\email{cdf@uma.es}
\begin{document}


\maketitle 

\vspace{-15pt}\begin{center}\scriptsize{\textrm{Dedicated to the memory of Professor Thomas Friedrich}}\end{center}

\begin{abstract}
Our aim is to support the choice of two remarkable connections with torsion in a 3-Sasakian manifold, proving that, in contrast to the Levi-Civita connection,  the holonomy group in the homogeneous cases reduces to a proper subgroup of the special orthogonal group, of dimension considerably smaller. We realize the computations of the holonomies in a unified way, by using as a main algebraic tool a nonassociative   structure, that one of  symplectic triple system. 
\end{abstract}

\section{Introduction }

The 3-Sasakian geometry is a natural generalization of the Sasakian geometry introduced independently by Kuo and Udriste in 1970 \cite{Kuo, Udriste}. The 3-Sasakian manifolds are very interesting objects. 
In fact, any 3-Sasakian manifold  has three orthonormal Killing vector fields which span an integrable 3-dimensional distribution. Under some regularity conditions on the corresponding foliation, the space of leaves is a quaternionic K\"ahler manifold with positive scalar curvature. And conversely, over any quaternionic K\"ahler manifold with positive scalar curvature, there is a principal $\SO(3)$-bundle  {that} admits a 3-Sasakian structure \cite{libroGB}. The canonical example is the sphere $\mathbb{S}^{4n+3}$ realized as a hypersurface in $\mathbb H^{n+1}$.
Besides,   any 3-Sasakian manifold is   an Einstein space of positive scalar curvature \cite{kashiwada}.

In spite of that, during the period from 1975 to 1990, approximately, 3-Sasakian manifolds  {lived in} a relative obscurity, probably due to the fact that, according to the authors of the monograph \cite{libroGB},  Boyer and Galicki, the holonomy group of a 3-Sasakian manifold never reduces to a proper subgroup of the special orthogonal group. There was the idea that manifolds should be divided into different classes according to their holonomy group, being \emph{special geometries} those with holonomy group is not of general type. 
Sasakian manifolds in the past.

The revival of the 3-Sasakian manifolds  occurred  in the nineties.   On one hand,   Boyer and Galicki began to study 3-Sasakian geometry because it appeared as a natural object in their quotient construction for certain hyperk\"ahler manifolds \cite{BGM93}. This construction helped to find new collection of examples, and, since then, the topology and geometry of these manifolds  {have been}  continuously studied. On the other hand, during the same period,  the condition of admitting three Killing spinors in a compact Riemannian manifold of dimension 7  {was} proved to be equivalent to the existence of a 3-Sasakian structure \cite{FrieKath}. This  {results in}  a relation between the 3-Sasakian manifolds and the spectrum of the Dirac operator. Recall that Dirac operators have become a powerful tool for the treatment of various problems in geometry, analysis and theoretical physics
\cite{surveyagricola}.   
Much part of this work was developed by the Berlin school around Th.~Friedrich. 

Friedrich himself is who began to study,
in \cite{holonomias}, the holonomy group of connections with skew-symmetric torsion.
Our work on 3-Sasakian manifolds follows this approach. Recall that the holonomy group of the Levi-Civita connection being of general type indicates that  it  is   not well-adapted to the 3-Sasakian geometry, because, when a connection is well-adapted to a particular geometrical structure, then there are parallel tensors and the holonomy reduces. Thus, in this work, we compute the holonomy groups of several affine  connections with skew-symmetric torsion reportedly better adapted to the 3-Sasakian geometry than   the Levi-Civita connection, in order  to support their usage.     
This paper is a natural continuation of the work \cite{nues3Sas}, which looks for  good affine metric connections on a 3-Sasakian manifold, with nonzero torsion.
 The notion of torsion of a connection is due to Elie Cartan \cite{Cartancita}, who investigates several examples of connections preserving geodesics (an equivalent condition to the skew-symmetric torsion), explaining also how the connection should be adapted to the geometry under consideration. Although there are several geometric structures on Riemannian manifolds admitting a unique metric connection preserving the structure with skew-symmetric torsion
 (see references in the complete survey  \cite{surveyagricola} on geometries with torsion),   this is not the situation for a 3-Sasakian manifold: the 3-Sasakian structure is not parallel for any metric connection with skew-torsion. Hence, it is natural to wonder whether there is a \emph{best} affine metric connection on a 3-Sasakian manifold. This is one of the targets in \cite{nues3Sas}, in which some affine connections have been suggested, as the distinguished connection $\nabla^\mathcal S$  in 
\cite[Theorem~5.6]{nues3Sas}, or supported \cite[Theorem~5.7]{nues3Sas}, as the canonical connection $\nabla^c$ defined in \cite{AgriDileo}. 
Both have interesting geometrical properties in any 3-Sasakian manifold,  such as parallelizing the Reeb vector fields or the torsion, respectively.

Now we revisit these affine connections in the homogeneous cases, in which $\nabla^\mathcal S$ is besides the  unique   invariant metric connection with skew-torsion parallelizing the Reeb vector fields (and   $\nabla^c$ is one of the few ones  parallelizing the torsion). 
To be precise,  we compute their curvature operators $R^\mathcal S$ and $R^c$ in terms of an interesting algebraical structure which is hidden behind the 3-Sasakian homogeneous manifolds, namely, that one of symplectic triple system  (Section~\ref{sec_background}). We follow the suggestions in \cite[Remark~4.11]{nues3Sas}. Surprisingly, the expressions of the curvature operators -after complexificacion- turn out to be very simple in terms of these symplectic triple systems. The precise computations of the holonomy algebras (in general a very difficult task)  is not only feasible but also made in a unified way, independently of the considered homogeneous 3-Sasakian manifold. Our main result is essentially the following one:
\begin{quote}
For any simply-connected 3-Sasakian homogeneous manifold  $M^{4n+3}=G/H$, 
\begin{itemize}
\item If $\nabla^{\mathcal S}$ is the distinguished connection, the holonomy group  is isomorphic to $H\times \SU(2)$.
\item If $\nabla^c$ is the canonical connection, the holonomy group  is isomorphic to $H\times \SU(2)$ too (but embedded in $\SO(4n+3)$ in a different way).
\end{itemize}
\end{quote}
We  {provide an additional proof} that if $\nabla^g$ is the Levi-Civita connection, the holonomy group  is the whole group $\SO(4n+3)$. This case is enclosed for completeness, besides providing a nice expression for the curvature operators.

There are no general results about the holonomy group of connections with torsion, in spite of the results on some concrete examples in \cite{holonomias}. For instance, the holonomy algebra on $\mathbb{R}^n$ is   semisimple,  regardless  the considered metric connection with skew-torsion.  A consequence of our results is that the semisimplicity is also a feature of  the  holonomy algebras attached to  {either} $\nabla^{\mathcal S}$  {or} $\nabla^{c}$ if $G\ne\SU(m)$. In contrast, if $G=\SU(m)$, both the holonomy algebras attached to  {either} $\nabla^{\mathcal S}$  {or} $\nabla^{c}$ have a one-dimensional center.

Another precedent on the computation of the holonomy algebras of invariant affine connections with torsion on   homogeneous spaces is \cite{esphomogeneosdeG2}, which deals with the natural connection on the eight reductive homogeneous spaces  of $G_2$, in particular on $G_2/\Sp(1)$. It proves that its related holonomy algebra \cite[Proposition~5.20]{esphomogeneosdeG2} is the whole orthogonal algebra, but take into consideration that  the  metric considered in \cite{esphomogeneosdeG2} is homothetic to the Killing form, so that it is not the  Einstein metric considered in the 3-Sasakian manifold $G_2/\Sp(1)$ (compare with Eq.~\eqref{eq_nuestrag}).  Also, the curvatures on some invariant  connections with skew-torsion on the 3-Sasakian manifold $\mathbb S^7\cong \frac{\Sp(2)}{\Sp(1)}$ have been computed in \cite[\S5.2]{DGP}, including some Ricci-flat not flat affine connections different from $\nabla^{\mathcal S}$ and some flat affine connections on $\mathbb S^7$ (see also \cite{AgriFri2}).\smallskip

Although our objectives in this paper are mainly to use algebraical structures to understand better some aspects of the geometry of the 3-Sasakian manifolds, we can also  read our results in terms of the links between algebra and geometry. Symplectic triple systems began to be studied  in the construction of 5-graded Lie algebras. So, Lie algebras gave birth to this kind of triples. This is not unusual, but it happened also with Freudenthal triple systems and some ternary algebras in the seventies \cite{Yamaguti}. The common origin was to investigate algebraic characterizations of the metasymplectic geometry due to H. Freudenthal, from a point of view of ternary structures (see the works by Meyberg,  Faulkner,  Ferrar and Freudenthal himself). But, afterwards, these structures have become relevant by  their own merits, since they have been used to construct new simple Lie algebras in prime characteristic, new Lie superalgebras, and so on. The philosophy is that, at the end, geometry have influenced the algebraical development of certain algebraical structures.  And vice versa, many algebraical structures -not necessarily binary structures- could help in the study of Differential Geometry. 
\medskip

The paper is organized as follows. The structure of  symplectic triple systems is recalled in Section~\ref{sec_background},  together with   a collection of examples    exhausting the classification of complex simple symplectic triple systems, and their   standard enveloping Lie algebras.
Section~\ref{se_3Sasmanif} recalls the notion of a 3-Sasakian manifold and the classification of the homogeneous ones, relating each 3-Sasakian homogeneous manifold with the corresponding   symplectic triple system (Proposition~\ref{pr_stsasociados}). The distinguished and canonical affine connections are introduced and translated to an algebraical setting, and their geometrical properties are recalled,  {such} as the invariance. Finally, Section~\ref{se_resultados} contains the above mentioned results on the holonomy algebras related to such affine connections. Some comparisons have been added on the related Ricci tensors, scalar curvatures and a table comparing the dimensions of the holonomy groups.

 \section{Background on symplectic triple systems }\label{sec_background}
 
 This subsection is essentially extracted from \cite{alb_triples}.
 \begin{definition}
Let $\FF$ be a field and $T$ an $\FF$-vector space endowed with 
a symplectic form $(\, ,\,  )\colon T\times T\to\mathbb F$, 
  and a triple product $[\, ,\, ,\,]\colon T\times T\times T\to T$.
  It is said that $(T,[\, ,\, ,\,],(\, ,\,  ))$ is a \emph{symplectic triple system} if satisfies 
\begin{align}
[x,y,z]=[y,x,z],\label{eq_uno}\\
[x,y,z]-[x,z,y]=(x,z)y-(x,y)z+2(y,z)x,\label{eq_dos}\\
[x,y,[u,v,w]]=[[x,y,u],v,w]+[u,[x,y,v],w]+[u,v,[x,y,w]],\label{eq_tres}\\
([x,y,u],v)=-(u,[x,y,v]),\label{eq_cuatro}
  \end{align}
  for any $x,y,z,u,v,w\in T$. 
  \end{definition} 
  An \emph{ideal} of the symplectic triple system $T$ is a subspace $I$ of $T$ such that $[T,T,I]+[T,I,T]\subset I$; and the system is said to be  \emph{simple} if $[T,T,T]\ne0$ and it contains no proper ideal. If $\dim T\ne1$ and the field is either $\RR$ or $\CC$, the simplicity of $T$ is equivalent to the nondegeneracy of the bilinear form \cite[Proposition~2.4]{alb_triples}.
  
  There is a close relationship between symplectic triple systems and certain $\ZZ_2$-graded Lie algebras \cite{Yamaguti}. 
  We denote by 
  $$
  d_{x,y}:=[x,y,. ]\in {\rm{End}}_{\FF}(T).
  $$
   Observe that the above set of identities can be read in the following form. 
  By \eqref{eq_cuatro}, $d_{x,y}$ belongs to the symplectic Lie algebra
 $$
  \spf(T,(\, ,\,  ))=\{d\in\gl(T): (d(u),v)+(u,d(v))=0\ \forall u,v\in T\},
  $$
 which is a subalgebra of the general linear algebra $\gl(T)=\mathop{\rm{End}}_{\FF}(T)^-$;
and by \eqref{eq_tres}, $d_{x,y}$ belongs to the   Lie algebra of derivations of the triple, i.e.,
  $$
   \der(T,[\, ,\, ,\, ]):=\{d\in\gl(T): d([u,v,w])=[d(u),v,w]+[u,d(v),w]+[u,v,d(w)]\ \forall u,v,w\in T\},
  $$
  which is also a Lie subalgebra of   $\gl(T)$.
  We call set of \emph{inner derivations} the linear span 
  $$
  \inder(T):=\{\sum_{i=1}^nd_{x_i,y_i}:x_i,y_i\in T\},
  $$
  which is a Lie subalgebra of $\der(T,[\, ,\, ,\, ])$, again taking into account \eqref{eq_tres}. Now, consider
  $(V,\langle.,.\rangle)$ a two-dimensional vector space endowed with a nonzero alternating bilinear form, and the vector space
  $$
  \g(T):=\spf(V,\langle.,.\rangle)\oplus \inder(T)\oplus\, V\otimes T.
  $$
  Then $ \g(T)$ is endowed with a $\ZZ_2$-graded Lie algebra structure (\cite[Theorem~2.9]{alb_triples}) in the following way:
  \begin{itemize}
  \item The usual bracket on the Lie subalgebra $\g(T)_{\bar0}:=\spf(V,\langle.,.\rangle)\oplus \inder(T)$;
  \item The natural action of $\g(T)_{\bar0}$ on $\g(T)_{\bar1}:=  V\otimes T$;
  \item The product of two odd elements is given by
  \begin{equation}\label{eq_sympproducto}
  [a\otimes x,b\otimes y]=(x,y)\gamma_{a,b}+\langle a,b\rangle d_{x,y}\in \g(T)_{\bar0},
\end{equation}
  if $a,b\in V$ and $x,y\in T$, where $\gamma_{a,b}\in\spf(V,\langle.,.\rangle)$ is defined by
  $$
  \gamma_{a,b}:=\langle a,.\rangle b+\langle b,.\rangle a.
  $$
  \end{itemize}
  The algebra $ \g(T)$ is called the \emph{standard enveloping algebra} related to the symplectic triple system $T$. 
  Moreover, $ \g(T)$ is a simple Lie algebra if and only if so is $(T,[\, ,\, ,\,],(\, ,\,))$ (\cite[Theorem~2.9]{alb_triples}).
  
  Let us recall which $\ZZ_2$-graded Lie algebras are the standard enveloping algebras related to some symplectic triple system. As above, take $(V,\langle.,.\rangle)$ a two-dimensional vector space endowed with a nonzero alternating bilinear form. Then, if $\g=\g_{\bar0}\oplus\g_{\bar1}$ is a $\ZZ_2$-graded Lie algebra such that there is $\mathfrak{s}$ an ideal of $\g_{\bar0}$ with $\g_{\bar0}=\spf(V,\langle.,.\rangle)\oplus\mathfrak s$ and there is $T$ an $\mathfrak{s}$-module such that $\g_{\bar1}=V\otimes T$ as $\g_{\bar0}$-module, then most of times $T$ can be endowed with a symplectic triple system structure. To be precise,  the invariance of the Lie bracket in $\g$ under the $\spf(V,\langle.,.\rangle)$-action provides the existence of
  \begin{itemize}
  \item $(\, ,\, )\colon T\times T\to\mathbb F$, alternating,
  \item $d\colon T\times T \to \mathfrak{s}$, symmetric,
  \end{itemize}
  such that Eq.~\eqref{eq_sympproducto} holds for any $x,y\in T$, $a,b\in V$. Now, if $(\, ,\, )\ne0$ and we consider the triple product on $T$ defined by $[x,y,z]:=d_{x,y}(z)\equiv d({x,y})\cdot z\in\mathfrak{s}\cdot T\subset T$, then  $(T,[\, ,\, ,\,],(\, ,\, ))$ is proved to be a  {symplectic triple system}. 
    
  \begin{remark}\label{re_Lts}
  Another algebraical structure involved in our study,  better well-known than the one of symplectic triple system, is that one of \emph{Lie triple system}, which can be identified with the tangent space to a symmetric space. Its relevance now is because $(V\otimes T,[.,.,.])$ is a Lie triple system for the triple product given by
  $$
  [a\otimes x,b\otimes y,c\otimes z]=\gamma_{a,b}(c)\otimes (x,y)z+\langle a,b\rangle c\otimes [x,y,z].
  $$
  Thus   some interesting properties hold, as $\sum_{\tiny
 \begin{array}{l}\textrm{cyclic} \vspace{-4pt}\\a,b,c\vspace{-4pt}\\x,y,z\end{array}} [a\otimes x,b\otimes y,c\otimes z]=0$.
  \end{remark}
  
  Now we describe some important examples of simple symplectic triple systems, extracted from \cite{triples}.

\begin{example}\label{ex_symplectic}
Let $W$ be a vector space over $\FF$ endowed with a nondegenerate alternating bilinear form $(.|.)$. 
Then $T=W$ is a symplectic triple system with the triple product given by
$$
[x,y,z]:=(x,z)y+(y,z)x,
$$
if $x,y,z\in W$. It is easy to check that the algebra of inner derivations $\inder(W)$ is isomorphic to $  \spf(W)$
and the standard enveloping algebra $\g(W)$ is isomorphic to the symplectic Lie algebra $\spf(V\oplus W)$, 
where   the alternating bilinear form is defined by
$$
(v_1+w_1,v_2+w_2)=\langle v_1,v_2\rangle +(w_1|w_2).
$$
This symplectic triple system $W$ is called \emph{of symplectic type}. 
\end{example}

\begin{example}\label{ex_orthogonal}
Let $W$ be a vector space over $\FF$ endowed with a nondegenerate symmetric bilinear form $b\colon W\times W\to\FF$. Then $T=V\otimes W$ is a symplectic triple system with 
$$
\begin{array}{l}
(u\otimes x,v\otimes y):=\frac12\langle u,v\rangle b(x,y),\\
{[}u\otimes x,v\otimes y,w\otimes z]:=\frac12\gamma_{u,v}(w)\otimes b(x,y) z+\langle u,v\rangle w\otimes(b(x,z)y-b(y,z)x),
\end{array}
$$
for any $u,v,w\in V$ and $x,y,z\in W$. Besides,    the algebra of inner derivations $\inder(T)$ is isomorphic to $  \spf(V)\oplus\sof(W)$
and the standard enveloping algebra $\g(T)$ is isomorphic to the orthogonal algebra $\sof((V\otimes V)\oplus W)$,
where   the considered symmetric bilinear form $B$ is
$$
B(v_1\otimes v_2+w_1,v'_1\otimes v'_2+w_2)=\langle v_1,v'_1\rangle \langle v_2,v'_2\rangle+b(w_1,w_2).
$$
The symplectic triple system $T=V\otimes W$ is called \emph{of orthogonal type}. 
\end{example}

\begin{example}\label{ex_special}
Let $W$ be any vector space over $\FF$ and $W^*$ its dual vector space. Then $T=W\oplus W^*$  is a symplectic triple system for the alternating map defined by $(x,y):=0=:(f,g),(f,x):=f(x)$ if $x,y\in W$ and $f,g\in W^*$; and triple product given by
$$
\begin{array}{l}
{[}x,y,T]:=0=:[f,g,T] ,\\
{[}x,f,y]:=f(x)y+2f(y)x,\\
{[}x,f,g]:=-f(x)g-2g(x)f.
\end{array}
$$
 Besides,    the algebra of inner derivations $\inder(T)$ is isomorphic to the general linear algebra $  \gl(W)$
and the standard enveloping algebra $\g(T)$ is isomorphic to the special linear algebra $\slf(V\oplus W)$.
The symplectic triple system $T=W\oplus W^*$ is called \emph{of special type}. 
\end{example}

\begin{example}\label{ex_exceptional}
To simplify, assume $\FF=\CC$ and take either $J=\CC$ or $J=H_3(C^\CC)=\{x=(x_{ij})\in \mathop{\textrm{Mat}}_{3\times3}(C^\CC):x_{ji}=\overline{x_{ij}}\}$ with $C\in\{\RR,\CC,\HH, \OO\}$.\footnote{$C^\CC$ is isomorphic, respectively, to $\CC$, $\CC\times\CC$, $\mathop{\textrm{Mat}}_{2\times2}(\CC)$ and the Zorn algebra.}
 Then the vector space 
$$
T_J=\left\{\begin{pmatrix}\alpha& a\\b&\beta\end{pmatrix}:\alpha,\beta\in\CC,a,b\in J\right\}
$$
becomes a symplectic triple system with the alternating map and triple product given by
$$
\begin{array}{ll}
(x_1,x_2):=\alpha_1\beta_2-\alpha_2\beta_1-t(a_1,b_2)+t(b_1,a_2),\\
\left[x_1,x_2,x_3\right]:=\begin{pmatrix}\gamma(x_1,x_2,x_3)& c(x_1,x_2,x_3)\\-c(x_1^t,x_2^t,x_3^t)&-\gamma(x_1^t,x_2^t,x_3^t)\end{pmatrix},
\end{array}
$$
where, if $x_i=\tiny{\begin{pmatrix}\alpha_i& a_i\\b_i&\beta_i\end{pmatrix}}\in T_J$,
and $x_i^t:=\tiny{\begin{pmatrix}\beta_i& b_i\\a_i&\alpha_i\end{pmatrix}}$, we have
$$
\begin{array}{ll}
 \gamma(x_1,x_2,x_3)= &\big(-3(\alpha_1\beta_2+\beta_1\alpha_2)+t(a_1,b_2)+t(b_1,a_2) \big)\alpha_3\\
 &+2\big( \alpha_1t(b_2,a_3)+\alpha_2t(b_1,a_3) -t(a_1\times a_2,a_3)  \big);\\
c(x_1,x_2,x_3)=&\big(-(\alpha_1\beta_2+\beta_1\alpha_2)+t(a_1,b_2)+t(b_1,a_2) \big)a_3\\
&+2\big((t(b_2,a_3) -\beta_2\alpha_3)a_1+ (t(b_1,a_3) -\beta_1\alpha_3)a_2\big)\\
&+2\big( \alpha_1(b_2\times b_3) +\alpha_2(b_1\times b_3) +\alpha_3(b_1\times b_2)  \big)\\
&-2\big( (a_1\times a_2)\times b_3  +(a_1\times a_3)\times b_2+(a_2\times a_3)\times b_1 \big).
\end{array}
$$
Here, if $J=\CC$, then $t(a,b)=3ab$ and $a\times b=0$. And, if $J=H_3(C^\CC)$,  then $t(a,b)=\frac12\tr(ab+ba)$ and $a\times b=
\frac12\left(ab+ba-\tr(a)b-\tr(b)a+(\tr(a)\tr(b)-t(a,b))I_3  \right)$,
for $I_3\in J$ the identity matrix.\footnote{Thus $(a\times a)\cdot a=n(a)I_3$, for  $n$ the cubic norm and $\cdot$ the symmetrized product $a\cdot b=\frac12(ab+ba)$, so that $\times$ can be seen as a kind of symmetric cross product of the Jordan algebra $(J,\cdot)$.}
Now the pair $(\g(T),\inder(T))$ is described respectively by 
$$
(\mathfrak{g}_2^\CC,\slf(2,\CC)),\quad
(\mathfrak{f}_4^\CC,  \spf(6,\CC) ),\quad
(\mathfrak{e}_6^\CC, \slf(6,\CC) ),\quad
(\mathfrak{e}_7^\CC, \mathfrak{so}({12},\CC) ),\quad
(\mathfrak{e}_8^\CC,\mathfrak{e}_7^\CC).
$$
The symplectic triple system $T_J $ is called \emph{of exceptional type}. 
\end{example}
 According to \cite[Theorem~2.30]{alb_triples}, the described symplectic triple systems in Examples~\ref{ex_symplectic}, \ref{ex_orthogonal}, \ref{ex_special}, and \ref{ex_exceptional}, 
 exhaust the classification of simple symplectic triple systems over the field $\CC$.

\section{3-Sasakian homogeneous manifolds }\label{se_3Sasmanif}

\begin{definition}
A triple $\mathcal{S}=\{\xi, \eta, \varphi\}$ is called a \emph{Sasakian structure} on a Riemannian manifold $(M,g)$ when 
\begin{itemize}
\item $\xi\in \mathfrak{X}(M)$ is a unit Killing vector field (called the \emph{Reeb vector field}), 
\item $\varphi$ is the endomorphism field given by $\varphi (X)=- \nabla^g_{X}\xi$ for all $X\in \mathfrak{X}(M)$, denoting by $\nabla^g$ the  Levi-Civita connection, 
\item $\eta$ is the $1$-form on $M$ metrically equivalent to $\xi$, i.e., $\eta(X)=g(X, \xi)$, \end{itemize}
and the following condition is satisfied
$$
(\nabla^g_{X}\varphi)(Y)=g(X,Y)\xi-\eta (Y) X 
$$
for any $X, Y\in \mathfrak{X}(M)$.
  A \emph{Sasakian manifold} is a Riemannian manifold  $(M,g)$ endowed with a fixed Sasakian structure $\mathcal{S}$. 
  \end{definition}
 
 \begin{definition}
A \emph{$3$-Sasakian structure} on $(M,g)$ is a family of Sasakian structures  $\mathcal{S}=\{\xi_{\tau} ,\eta_{\tau}, \varphi_{\tau}\}_{\tau\in \mathbb{S}^{2}}$ on $(M,g)$ parametri\-zed by   the unit sphere  $ \mathbb{S}^{2}$  and such that, for $\tau, \tau'\in \mathbb{S}^{2}$, the following compatibility conditions hold 
\begin{equation}\label{eq_compatibilityconditions}
g(\xi_{\tau}, \xi_{\tau '})=\tau\cdot \tau'  \quad \mathrm{and}\quad \quad [\xi_\tau, \xi_{\tau'}]=2 \xi_{\tau \times \tau'},
\end{equation}
where  \lq\lq$ \cdot $\rq\rq\ and \lq\lq$\times$\rq\rq\ are the standard inner and cross products in $\mathbb{R}^{3}$,  and   the Reeb vector fields are extended from $ \mathbb{S}^{2}$ to $\mathbb{R}^{3}$ by linearity.
Again if $\mathcal S$ is fixed, $(M,g)$ is said a 3-Sasakian manifold.
 \end{definition}
Recall that having a $3$-Sasakian structure on a Riemannian manifold $(M,g)$ is equivalent to fix three Sasakian structures $\mathcal{S}_{k}=\{\xi_{k} ,\eta_{k}, \varphi_{k}\}$, for $k=1,2,3$, such that $g(\xi_{i},\xi_{j})=\delta_{ij}$ and
$[\xi_{i},\xi_{j}]=2 \epsilon_{ijk}\xi_{k}$.  The compatibility conditions imply,
$$
\begin{array}{c}
\varphi_{k}=\varphi_{i}\circ \varphi_{j }-\eta_{j}\otimes \xi_{i}=-\varphi_{j}\circ \varphi_{i }+\eta_{i}\otimes \xi_{j},\qquad
\xi_{k}=\varphi_i(\xi_{j})=-\varphi_j(\xi_{i}),\\
\end{array}
$$
for any even permutation $(i,j,k)$ of $(1,2,3)$ (i.e., $\epsilon_{ijk}=1$).

The $3$-Sasakian homogeneous manifolds were classified   in \cite{Ale}. Surprisingly,   there is a one-to-one correspondence between   compact simple Lie algebras and   simply-connected $3$-Sasakian homogeneous manifolds.

\begin{theorem}
\label{th_lasSashomogeneas}
Any $3$-Sasakian homogeneous manifold is one of the following coset manifolds:
$$
\frac{\Sp(n+1)}{\Sp(n)},\quad
\frac{\Sp(n+1)}{\Sp(n)\times\mathbb Z_2},\quad
\frac{\SU(m)}{S(\mathrm{U}(m-2)\times \mathrm{U}(1))},\quad
\frac{\SO(k )}{\SO(k-4)\times \Sp(1)},$$
$$\frac{G_2 }{\Sp(1) },\quad  
\frac{F_4 }{\Sp(3) },\quad
\frac{E_6 }{\SU(6) },\quad  
\frac{E_7 }{\mathrm{Spin}(12) },\quad 
\frac{ E_8}{ E_7},
$$
for $n\ge0$, $m\ge3$ and $k\ge7$ ($\Sp(0)$ denoting the trivial group).
\end{theorem}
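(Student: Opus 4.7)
The plan is to reduce the statement to the classical Wolf/Alekseevski classification of homogeneous quaternionic K\"ahler manifolds with positive scalar curvature, by passing to the leaf space of the Reeb foliation. First, I would exploit the bracket relations among the three Reeb vector fields: the identities $g(\xi_i,\xi_j)=\delta_{ij}$ and $[\xi_i,\xi_j]=2\epsilon_{ijk}\xi_k$ show that $\xi_1,\xi_2,\xi_3$ span an involutive three-dimensional distribution $\mathcal V$ and generate a Lie algebra of Killing fields isomorphic to $\spf(1)$. Since any $3$-Sasakian homogeneous manifold is a compact Einstein manifold of positive scalar curvature, this infinitesimal action integrates to the action of a compact three-dimensional Lie group $K\cong\Sp(1)$ or $K\cong\SO(3)$, producing a Riemannian foliation whose leaves are the (compact) $K$-orbits.

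Next, I would check that the leaf space $N=M/K$ carries a quaternionic K\"ahler structure with positive scalar curvature: the endomorphisms $\varphi_\tau$ restricted to the horizontal distribution $\mathcal H=\mathcal V^\perp$ descend to anticommuting almost complex structures on $N$ spanning a rank-three parallel subbundle of the endomorphism bundle of $TN$, and the metric on $M$ induces a Riemannian submersion metric on $N$ for which this subbundle is parallel. Since the $G$-action on $M=G/H$ preserves the family of Sasakian triples $\{\xi_\tau,\eta_\tau,\varphi_\tau\}$, it descends to an isometric action on $N$ preserving the quaternionic K\"ahler structure, making $N$ into a homogeneous quaternionic K\"ahler manifold.

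At this point the Wolf/Alekseevski classification applies: homogeneous quaternionic K\"ahler manifolds with positive scalar curvature are in bijection with compact simple Lie groups, and are given explicitly by $\HH P^n$, the complex Grassmannians $\mathrm{Gr}_2(\CC^m)$, the real oriented Grassmannians $\widetilde{\mathrm{Gr}}_4(\RR^k)$, and the five exceptional Wolf spaces associated with $G_2$, $F_4$, $E_6$, $E_7$ and $E_8$. Finally, I would reconstruct $M$ as the total space of a principal $K$-bundle $M\to N$, reading off $H$ as the preimage of $K$ inside the Wolf-space isotropy: the two choices $K=\Sp(1)$ and $K=\SO(3)$ explain the two entries with $G=\Sp(n+1)$ (the $\ZZ_2$ being the kernel of $\Sp(1)\to\SO(3)$), whereas for the other simple groups the Wolf-space isotropy already contains a distinguished $\Sp(1)$-factor whose removal yields a single coset, matching the remaining entries of the list. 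The main obstacle is the Wolf/Alekseevski classification itself, which I would invoke as a black box; modulo that, the argument is essentially a matter of bundle bookkeeping.
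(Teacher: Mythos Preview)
The paper does not prove this theorem at all: it is quoted as a known classification result, attributed to Alekseevski \cite{Ale} (see the sentence immediately preceding the statement). So there is no ``paper's own proof'' to compare against.

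Your sketch follows the standard route in the literature and is broadly sound: pass to the leaf space of the Reeb foliation, identify it as a homogeneous quaternionic K\"ahler manifold of positive scalar curvature, invoke the Wolf/Alekseevski list, and then reconstruct $M$ as a principal bundle over each Wolf space. A few points would need tightening if you wanted a self-contained argument rather than a plan. First, you should justify that the Reeb foliation is regular (not merely quasi-regular) in the homogeneous setting, so that the leaf space is a smooth manifold and the projection a genuine Riemannian submersion; this uses the transitive $G$-action in an essential way. Second, the assertion that $G$ preserves the family $\{\xi_\tau,\eta_\tau,\varphi_\tau\}$ is effectively the definition of ``$3$-Sasakian homogeneous'' adopted here, so it is immediate, but you should say so rather than present it as something to be checked. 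Third, and most substantially, the ``bundle bookkeeping'' at the end hides real work: one must identify the isotropy of the induced transitive $G$-action on $N$ with the Wolf-space isotropy $H\cdot\Sp(1)$, and then analyse which of the two possible $K$-bundles (for $K=\Sp(1)$ or $\SO(3)$) actually occurs over each Wolf space---in particular, why only the symplectic series admits both lifts. None of this is wrong in your outline, but it is where the content of Alekseevski's argument lies.
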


Strong algebraical implications are suggested by the above result. To our aims, \cite[Remark~4.11]{nues3Sas}  can be read as follows:
\begin{proposition}\label{pr_stsasociados}
If $\g$ and $\hh$ are the Lie algebras of $G$ and $H$, for $M=G/H$ a $3$-Sasakian homogeneous manifold, 
then there is a complex symplectic triple system $T$ such that $(\g^\CC,\hh^\CC)=(\g(T),\inder(T))$. More precisely,  
\begin{enumerate}
\item If $M\in\left\{ \frac{\Sp(n+1)}{\Sp(n)},\ 
\frac{\Sp(n+1)}{\Sp(n)\times\mathbb Z_2}\right\}$ for some $n\ge0$, then $T$ is a symplectic triple system  {of symplectic type} as in Example~\ref{ex_symplectic}. 
\item If $M=\frac{\SU(m)}{S(\mathrm{U}(m-2)\times \mathrm{U}(1))}$ for some $m\ge3$,  then $T$ is a symplectic triple system  {of special type} as in Example~\ref{ex_special}. 

\item If $M=\frac{\SO(k )}{\SO(k-4)\times \Sp(1)}$ for some $k\ge7$,  then $T$ is a symplectic triple system  {of orthogonal type} as in Example~\ref{ex_orthogonal}. 
\item If $M\in\left\{ \frac{G_2 }{\Sp(1)},\  
\frac{F_4 }{\Sp(3)},\ 
\frac{E_6 }{\SU(6)},\   
\frac{E_7 }{\mathrm{Spin}(12)},\  
\frac{ E_8}{ E_7}
\right\},$ then $T$ is a symplectic triple system  {of exceptional type} as in Example~\ref{ex_exceptional}. 
\end{enumerate}
\end{proposition}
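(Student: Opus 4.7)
The plan is to produce, for each homogeneous space $M=G/H$ in Theorem~\ref{th_lasSashomogeneas}, an explicit $\ZZ_2$-grading on $\g^{\CC}$ of the form required by the converse construction recalled in Section~\ref{sec_background} (the paragraph following the definition of the standard enveloping algebra). The odd part will then yield a symplectic triple system $T$ with $(\g(T),\inder(T))=(\g^{\CC},\hh^{\CC})$, and the isomorphism type of $T$ will be identified via the classification \cite[Theorem~2.30]{alb_triples}.

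The starting point is an $\spf(1)$-subalgebra $\mathfrak a\subset\g$ destined to play the role of $\spf(V,\langle\,,\,\rangle)$. Since the three Reeb vector fields are $G$-invariant, they correspond to $H$-fixed vectors $\bar\xi_1,\bar\xi_2,\bar\xi_3\in\mm$, and the bracket relation $[\xi_i,\xi_j]=2\epsilon_{ijk}\xi_k$ forces $\mathfrak a:=\mathrm{span}(\bar\xi_1,\bar\xi_2,\bar\xi_3)$ to be an $\spf(1)$-subalgebra commuting with $\hh$. Refining the reductive decomposition as $\g=\hh\oplus\mathfrak a\oplus\mm'$ (with $\mm'$ the orthogonal complement of $\mathfrak a$ inside $\mm$), the action $\ad\bar\xi_k|_{\mm'}$ agrees up to normalization with $\varphi_k|_{\mm'}$, so $\mm'$ becomes a quaternionic $\mathfrak a$-module and
$$
\mm'^{\CC}\cong V\otimes T,\qquad \g^{\CC}_{\bar 0}=\spf(V,\langle\,,\,\rangle)\oplus\hh^{\CC},\qquad \g^{\CC}_{\bar 1}=V\otimes T,
$$
for a $2n$-dimensional $\hh^{\CC}$-module $T$ (where $\dim M=4n+3$) and $V$ the natural $\slf(2,\CC)$-module. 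The alternating form on $T$ produced by the converse construction is nonzero, because otherwise $V\otimes T$ would be a proper graded ideal of the simple $\g^{\CC}$; hence $T$ inherits a simple symplectic triple system structure with $(\g(T),\inder(T))=(\g^{\CC},\hh^{\CC})$.

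Identifying the isomorphism type of $T$ then reduces to matching complexified reductive pairs against Examples~\ref{ex_symplectic}--\ref{ex_exceptional}: the series $\Sp(n+1)/\Sp(n)$ and its $\ZZ_2$-quotient complexify to $(\spf(2n+2,\CC),\spf(2n,\CC))$, giving symplectic type; $\SU(m)/S(\mathrm U(m-2)\times\mathrm U(1))$ yields $(\slf(m,\CC),\gl(m-2,\CC))$, giving special type; $\SO(k)/(\SO(k-4)\times\Sp(1))$ yields $(\sof(k,\CC),\spf(2,\CC)\oplus\sof(k-4,\CC))$, giving orthogonal type; and the five exceptional manifolds produce precisely the five exceptional pairs listed at the end of Example~\ref{ex_exceptional}, matching the $T_J$ for $J\in\{\CC,H_3(\CC),H_3(\CC\times\CC),H_3(\mathop{\textrm{Mat}}_{2\times 2}(\CC)),H_3(\textrm{Zorn})\}$. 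The match is forced in each case by the $\inder(T)$ column together with a dimension check on $T$.

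The main obstacle I anticipate is verifying that $\mm'^{\CC}$ genuinely splits as $V\otimes T$, i.e., that on $\mm'$ the complexified $\mathfrak a$-action contains only copies of the spin-$1/2$ representation and that $\hh^{\CC}$ is precisely the $\mathfrak a^{\CC}$-commutant inside $\g^{\CC}_{\bar 0}$. Both are structural features of 3-Sasakian geometry --- the first because $\mm'$ is the horizontal part of a quaternionic contact distribution, the second expressing the maximality of $\mathfrak a\oplus\hh$ as the zero-weight space of the $\mathfrak a$-grading --- but they must be confirmed family by family, after which the four-way matching reduces to a routine dimension check.
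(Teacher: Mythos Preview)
The paper does not actually prove Proposition~\ref{pr_stsasociados}; it is stated as a direct reformulation of \cite[Remark~4.11]{nues3Sas}, and the structural facts you need (the $\ZZ_2$-grading $\g=\g_{\bar0}\oplus\g_{\bar1}$ with $\g_{\bar0}=\hh\oplus\spf(1)$, and the identification $\g_{\bar1}^{\CC}\cong V\otimes T$) are imported from that reference rather than established in the text. So your proposal is strictly more detailed than what the paper offers, and it follows precisely the same architecture the paper takes for granted: locate the $\spf(1)$ spanned by the Reeb fields, split off the horizontal part, complexify to $V\otimes T$, invoke the converse construction of Section~\ref{sec_background}, and match pairs $(\g^{\CC},\hh^{\CC})$ against Examples~\ref{ex_symplectic}--\ref{ex_exceptional}. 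Your identification of the ``main obstacle'' (that $\g_{\bar1}^{\CC}$ is isotypic of type $V$ for the $\slf_2(\CC)$-action) is exactly the substantive point, and your proposed resolution via the quaternionic structure on the horizontal distribution is the standard one.

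One small correction: your argument that the alternating form $(\,,\,)$ on $T$ is nonzero is not quite right as stated. If $(\,,\,)\equiv 0$ then by Eq.~\eqref{eq_sympproducto} one has $[\g_{\bar1}^{\CC},\g_{\bar1}^{\CC}]\subset\hh^{\CC}$, but this does not make $V\otimes T$ itself an ideal (the bracket still lands in $\hh^{\CC}$, not in $V\otimes T$). What it does make into a proper ideal is $\hh^{\CC}\oplus (V\otimes T)$, since then $\spf(V)$ never appears in any bracket with this subspace; simplicity of $\g^{\CC}$ then gives the contradiction. With that adjustment your sketch is sound.
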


We want to apply the algebraical structure of symplectic triple systems to make easy the computations on curvatures and holonomies. First, we recall Nomizu's Theorem  on invariant affine connections on reductive homogeneous spaces \cite{teoNomizu}, since, in this setting, the curvature and torsion tensors are easily written in algebraical terms.

Let $G$ be a Lie group acting transitively on a manifold $M$. 
An affine connection $\nabla$ on $M$ is said to be $G$-\textit{invariant}  if, for each $\sigma \in G$ and for all $X,Y\in\mathfrak{X}(M)$,
$$
\tau_{\sigma}(\nabla_{X}Y)=\nabla_{_{\tau_{\sigma}(X)}}\tau_{\sigma}(Y).
$$
Here 
$\tau_\sigma\colon M\to M$ denotes the diffeomorphism given by the action, $\tau_\sigma(p)=\sigma\cdot p$ if 
$p\in M$; and
the vector field $\tau_{\sigma}(X)\in \mathfrak{X}(M)$ is defined  by
$
 (\tau_{\sigma}(X) )_p:=(\tau_{\sigma})_{*}(X_{\sigma^{-1}\cdot p})
$
at each $p\in M$.
If $H$ is the isotropy subgroup at a fixed point $o\in M$, then there exists a diffeomorphism between $M$ and $G/H$. 
For $H$ connected,
the  homogeneous space $M=G/H$    is said to be \textit{reductive} if the Lie algebra $\mathfrak{g}$ of $G$ admits a vector space decomposition
$\mathfrak{g}=\mathfrak{h}\oplus\mathfrak{m}$,
for $\mathfrak{h}$ the Lie algebra of $H$ and $\mathfrak{m}$ an $\hh$-module. 
The differential map $\pi_{*}$ of the projection $\pi\colon G\to M=G/H$ gives a linear isomorphism
$(\pi_{*})_e\vert_{\mathfrak{m}}\colon\mathfrak{m}\to T_{o}M$, where $o=\pi(e)$.
Under these conditions, Nomizu's Theorem asserts:

\begin{theorem}\label{nomizu}
There is a one-to-one correspondence between the set of   $G$-invariant affine connections $\nabla$ on the reductive homogeneous space $M=G/H$ and the vector space of bilinear maps $\alpha\colon \mathfrak{m}\times\mathfrak{m}\to\mathfrak{m}$ such that $\hh\subset\der(\mathfrak{m},\alpha)$.

The torsion and curvature tensors of $\nabla$ can then be computed in terms of the related map $\alpha^{_\nabla}$ as follows:
$$
\begin{array}{l}
T^{\nabla}(X,Y)= \ \alpha^{_\nabla} (X,Y)-\alpha^{_\nabla} (Y,X)-[X,Y]_\mathfrak{m}, \\
 R^{\nabla}(X,Y)Z=  
\alpha^{_\nabla} (X,\alpha^{_\nabla}(Y, Z))-\alpha^{_\nabla}(Y,\alpha^{_\nabla} (X, Z)) 
  -\alpha^{_\nabla}( [X , Y]_{\mathfrak{m}}, Z)-[[X , Y]_{\mathfrak{h}} , Z],
  \end{array}
$$
for any $X, Y, Z\in \mathfrak{m}$, where   $[\ ,\ ]_{\mathfrak{h}}$ and   $[\ ,\ ]_{\mathfrak{m}}$ denote the composition of the bracket
($ [\mathfrak{m},\mathfrak{m}]\subset \mathfrak{g}$) with  the projections 
$\pi_{\mathfrak{h}},\pi_{\mathfrak{m}}\colon \g\to\g$  of $\mathfrak{g}= \mathfrak {h}\oplus \mathfrak{m}$ on each summand, respectively. 
 \end{theorem}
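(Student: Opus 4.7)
The plan is to establish the bijection at the base point $o=eH\in M=G/H$ and then invoke $G$-invariance to spread it everywhere; the torsion and curvature formulas will then fall out of the usual expressions applied to fundamental vector fields.

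For $X\in\g$ let $X^*\in\mathfrak{X}(M)$ denote the fundamental vector field associated to the $G$-action. Since $\g=\hh\oplus\mm$, the map $X\mapsto X^*|_o$ vanishes on $\hh$ and restricts to the linear isomorphism $(\pi_*)_e|_\mm\colon\mm\to T_oM$. Given a $G$-invariant affine connection $\nabla$, I would set
$$
\alpha^\nabla(X,Y):=(\nabla_{X^*}Y^*)_o\in T_oM\cong\mm
$$
for $X,Y\in\mm$. The $G$-invariance of $\nabla$, applied to the isotropy subgroup $H$, which fixes $o$ and acts on $T_oM$ via $\Ad|_\mm$, forces $\alpha^\nabla(\Ad(h)X,\Ad(h)Y)=\Ad(h)\alpha^\nabla(X,Y)$ for every $h\in H$; since $H$ is connected, this is equivalent to its infinitesimal version, namely $\hh\subset\der(\mm,\alpha^\nabla)$.

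Conversely, given any bilinear $\alpha$ with $\hh\subset\der(\mm,\alpha)$, I would declare $(\nabla_{X^*}Y^*)_o:=\alpha(X,Y)$ and spread this prescription by $G$-invariance: at $p=\sigma\cdot o$ set $(\nabla_{(\tau_\sigma)_*X^*}(\tau_\sigma)_*Y^*)_p:=(\tau_\sigma)_{*o}\alpha(X,Y)$. The derivation condition on $\alpha$ is precisely what is needed to guarantee independence of the representative $\sigma\in G$, since two choices differ by an element of the isotropy. Tensoriality in the first entry and the Leibniz rule in the second then extend this pointwise prescription to an honest affine connection, whose $G$-invariance is built in and whose associated bilinear form recovers $\alpha$.

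The torsion formula is immediate at $o$, since $[X^*,Y^*]|_o=-[X,Y]^*|_o=-(\pi_*)_e([X,Y]_\mm)$ and the standard expression for $T^\nabla(X^*,Y^*)$ at $o$ therefore reads $\alpha^\nabla(X,Y)-\alpha^\nabla(Y,X)-[X,Y]_\mm$. For the curvature,
$$
R^\nabla(X^*,Y^*)Z^*\bigl|_o=(\nabla_{X^*}\nabla_{Y^*}Z^*)_o-(\nabla_{Y^*}\nabla_{X^*}Z^*)_o-(\nabla_{[X^*,Y^*]}Z^*)_o,
$$
the last term contributes $-\alpha^\nabla([X,Y]_\mm,Z)$ as above. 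The chief obstacle, which I would grind through carefully, is the computation of $(\nabla_{X^*}\nabla_{Y^*}Z^*)_o$: although $\nabla_{Y^*}Z^*|_o=\alpha^\nabla(Y,Z)$, its values at neighbouring points are dictated by $G$-invariance, and differentiating this along $X^*$ must account for how $Y^*$ and $Z^*$ are transported by the isotropy action. This Taylor-style computation is the source of the extra term $-[[X,Y]_\hh,Z]$, which records the infinitesimal action of the $\hh$-part of the bracket on $\mm$ via the adjoint representation; after antisymmetrization in $X$ and $Y$ the remaining contributions collapse to the stated formula.
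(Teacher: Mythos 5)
Your strategy (reduce everything to the base point, use $H$-equivariance for well-definedness, and read off torsion and curvature on fundamental vector fields) is the classical route --- the paper itself gives no proof, citing Nomizu's original article --- but your proposal contains a genuine error at the foundation of the correspondence: the bilinear map you define, $\alpha^{\nabla}(X,Y):=(\nabla_{X^*}Y^*)_o$, is \emph{not} the map appearing in the stated formulas. With your own convention $[X^*,Y^*]=-[X,Y]^*$, the torsion at $o$ is
$$
T^{\nabla}(X^*,Y^*)_o=(\nabla_{X^*}Y^*)_o-(\nabla_{Y^*}X^*)_o-[X^*,Y^*]_o
=\alpha^{\nabla}(X,Y)-\alpha^{\nabla}(Y,X)+[X,Y]_{\mm},
$$
with a \emph{plus} sign: subtracting $[X^*,Y^*]_o=-(\pi_*)_e([X,Y]_\mm)$ contributes $+[X,Y]_\mm$, so your ``immediate'' verification of the torsion formula has a sign slip that exactly masks the mismatch. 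The map that satisfies the theorem's formulas is $\alpha^{\nabla}(X,Y)=(\nabla_{X^*}Y^*)_o+[X,Y]_{\mm}$. A sanity check: for the canonical connection of the second kind (the one with $\alpha=0$), parallel transport along $t\mapsto \exp(tX)\cdot o$ is $d\tau_{\exp(tX)}$, whence $(\nabla_{X^*}Y^*)_o=\frac{d}{dt}\big\vert_{0}\,\big(\Ad(\exp(-tX))Y\big)_{\mm}=-[X,Y]_{\mm}\neq 0$; your $\alpha$ fails to vanish precisely on the connection it should assign $\alpha=0$ to. The same discrepancy is already visible for $H=\{e\}$, $M=G$: fundamental fields of the left action are \emph{right}-invariant fields, while the Nomizu map is covariant differentiation of \emph{left}-invariant fields.

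This matters downstream, because the only genuinely delicate step of the theorem --- computing $(\nabla_{X^*}\nabla_{Y^*}Z^*)_o$, which is where the isotropy term $-[[X,Y]_{\hh},Z]$ arises --- is exactly the step you leave as a sketch. Carried out with your $\alpha$, it would \emph{not} collapse to the stated curvature identity: substituting $\alpha'(X,Y)=\alpha(X,Y)-[X,Y]_\mm$ into $[\alpha_X,\alpha_Y]-\alpha_{[X,Y]_\mm}-\ad[X,Y]_\hh$ produces extra bracket terms that do not cancel (already false on a Lie group, where $R=[\alpha_X,\alpha_Y]-\alpha_{[X,Y]}$ holds for the left-invariant $\alpha$ but not for $\alpha'$). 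Two further points need more care than you give them: (i) extending the prescription $(\nabla_{(\tau_\sigma)_*X^*}(\tau_\sigma)_*Y^*)_p:=(\tau_\sigma)_{*o}\alpha(X,Y)$ to an honest connection requires checking that the Leibniz-rule extension is consistent, since a given vector field is represented in many ways by translated fundamental fields; the clean fix is to observe that $G$-invariant connections form an affine space over the $G$-invariant $(1,2)$-tensor fields, which correspond bijectively to $H$-equivariant bilinear maps on $\mm\cong T_oM$, and then anchor the correspondence at the canonical connection, whose torsion and curvature are computed directly. (ii) Your reduction of $\Ad(H)$-equivariance to $\hh\subset\der(\mm,\alpha)$ correctly uses connectedness of $H$, which the paper does assume --- that part is fine.
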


Note that the above expressions give $T^{\nabla}$  and $R^{\nabla}$ at the point $o=\pi(e)$, but the invariance permits to recover the whole tensors on $M$.  
In particular, every  homogeneous 3-Sasakian manifold (all of them described in Theorem~\ref{th_lasSashomogeneas}) is a reductive homogeneous space, 
and its invariant affine connections have been thoroughly  studied in \cite{nues3Sas} and described in terms of the related
  reductive decomposition $\mathfrak{g}= \mathfrak {h}\oplus \mathfrak{m}$. Note also that 
$$
\g=\g_{\bar0}\oplus \g_{\bar1}$$
is in any case a $\ZZ_2$-graded Lie algebra such that
$$
\g_{\bar0}=\hh\oplus\spf(1) \quad\textrm{ and }\quad
\mm=\spf(1)\oplus\g_{\bar1}.
$$
The (invariant) metric $g$ is determined by $ g_o\colon T_oM\times T_oM\to\RR$, which, under the identification $(\pi_{*})_e\vert_{\mathfrak{m}}$, is given by 
\begin{equation}\label{eq_nuestrag}
g\vert_{\spf(1)}=- \frac1{4(n+2)} \kappa,\quad g\vert_{\g_{1} }=-\frac1{8(n+2)}\kappa,\quad g\vert_{\spf(1)\times\g_{1} }=0,
\end{equation}
for $\kappa$ the Killing form of $\g$ (\cite[Theorem~4.3ii)]{nues3Sas}).\smallskip

Let us stand out  certain invariant affine connections, joint with their related bilinear maps through Nomizu's theorem, which have been distinguished in  \cite{nues3Sas} because of satisfying certain geometrical properties.

\begin{example}\label{ex_LC}
The Levi-Civita connection $\nabla^g$ is related  to the   bilinear map $\alpha^g\colon\mm\times\mm\to\mm$, defined as follows:
 \begin{equation}\label{eq_alfadeLevi}
 \alpha^g(X,Y)=\left\{\begin{array}{ll}
 0&\text{if $\,X\in\spf(1)$ and $\,Y\in\g_{\bar1}$},\\
 \frac12[X,Y]_\mm&\text{if either $\,X,Y\in\spf(1)$ or $\,X,Y\in\g_{\bar1}$},\\
{ [X,Y]}_\mm&\text{if $\,X\in\g_{\bar1}$ and $\,Y\in\spf(1)$},\\
 \end{array}
 \right.
\end{equation}
according to \cite[Theorem~4.3]{nues3Sas}.
Note that $\alpha^g$ is not a skew-symmetric map.
\end{example}

If an affine connection is compatible with the metric, that is, the tensor $g$ is parallel ($\nabla g=0$), then the connection is determined by its torsion, with $\nabla=\nabla^g+\frac12T^\nabla$. Algebraically, an invariant affine connection is metric if and only if $\alpha^{_\nabla}(X,.)\in\sof(\mm,g)$ for all $X\in\mm$. Among the metric affine connections, 
we have studied those with totally skew-symmetric torsion, or briefly, \emph{skew-torsion}, that is, those ones where
\begin{equation}\label{tresforma}
\omega_{_\nabla}(X,Y,Z):=g(T^{\nabla}(X,Y),Z)
\end{equation}
is a differential $3$-form on $M$.   This  characterizes the remarkable fact that $\nabla$ and $\nabla^g$ share their (parametrized) geodesics. In   terms of the related bilinear map $\alpha^{_\nabla}$, the characterization is 
that the trilinear map 
$g(( \alpha^{_\nabla}-\alpha^g)(.,.),.)\colon\mm\times\mm\times\mm\to\RR$ is  alternating, so providing an element in $\hom_\hh(\Lambda^3(\mm),\RR)$.

Denote by  $\{\xi_i\}_{i=1}^3$  the $G$-invariant   vector fields on $M$ corresponding to the following basis of $\spf(1)=\suf(2)$, 
\begin{equation}\label{eq_losxis}
\xi_1=\left(\begin{array}{cc}\mathbf{i}&0\\0&-\mathbf{i} \end{array}\right),\quad
\xi_2=\left(\begin{array}{cc}0&-1\\ 1&0 \end{array}\right),\quad
\xi_3=\left(\begin{array}{cc}0&-\mathbf{i}\\ -\mathbf{i}&0 \end{array}\right).
\end{equation}
Then, according to \cite[Theorem~4.3]{nues3Sas}, the endomorphism field $\varphi_{i}=-\nabla^g \xi_{i}$ satisfies
\begin{equation*}\label{derivada de xi}
\varphi_i\vert_{\spf(1)}=\frac12\ad \xi_i,\quad \varphi_i\vert_{\g_{\bar1} }= \ad \xi_i ,
\end{equation*}
for each $i=1,2,3$, and   $\mathcal{S}_i=\{\xi_i,\eta_i,\varphi_i\}$   is a Sasakian structure for $\eta _i=g(\xi_{i},-)$ and $g\colon\mm\times\mm\to\RR$ given by Eq.~\eqref{eq_nuestrag}.  

Denote by $\alpha_o,\alpha_{rs}\colon \mm\times\mm\to\mm$  the alternating bilinear maps determined by the nondegeneracy of the metric $g$ and  
\begin{equation}\label{def_alfas}
 \eta_1\wedge\eta_2\wedge\eta_3(X,Y,Z)=g(\alpha_o(X,Y),Z),\quad 
\eta_r\wedge\Phi_s(X,Y,Z)=g(\alpha_{rs}(X,Y),Z),
\end{equation}
for any $X,Y,Z\in\mathfrak m$ and $r,s \in \{1,2,3\}$, where $\Phi_s(X,Y)=g(X,\varphi_s(Y)) $ is a 2-form. A key result \cite[Corollary~5.3]{nues3Sas} states that the set of bilinear maps related  by Theorem~\ref{nomizu} to the invariant (metric) affine connections with skew-torsion  coincides with the set
\begin{equation}\label{eq_skewtor}
\{\alpha^g+a\alpha_o+\sum_{r,s=1}^3b_{rs}\alpha_{rs}:a,b_{rs}\in\RR\},
\end{equation}
for any 3-Sasakian homogeneous manifold of dimension at least 7 except for the case $M= \frac{\SU(m)}{S(\mathrm{U}(m-2)\times \mathrm{U}(1))} $,   in which the set of bilinear maps related to the invariant   affine connections with skew-torsion contains strictly the set in \eqref{eq_skewtor}.
For further use, recall from \cite[Eq.~(40)]{nues3Sas},
\begin{equation}\label{eq_torsiones}
\begin{array}{ll}
\alpha_o(X,Y)=0,\qquad&  \alpha_{rs}(X,Y) =\Phi_s(X,Y)\xi_r,        \\
  \alpha_o(X,\xi_j)=0,&   \alpha_{rs}(X,\xi_j)= \delta_{rj}\varphi_sX,       \\
  \alpha_o(\xi_i,\xi_{i+1})=\xi_{i+2},\qquad\quad&   \alpha_{rs}(\xi_i,\xi_{i+1})=-\delta_{rs}\xi_{i+2},   \\
\end{array}
\end{equation}
for any $X,Y\in\mathfrak{g}_{\bar1}$.

\begin{example}\label{ex_canonical}
According to \cite[Theorem~4.1.1]{AgriDileo}, the \emph{canonical connection} of a 3-Sasakian manifold is the (metric)  affine connection with skew-torsion  characterized by 
\begin{equation}\label{eq_laquesalemucho}
\nabla^c_X\varphi_i=-2(\eta_{i+2}(X)\varphi_{i+1}-\eta_{i+1}(X)\varphi_{i+2}).
\end{equation}
 Its torsion $T^c$ is determined by the 3-form
\begin{equation*} 
\omega^c= \sum_{i=1}^3 \eta_{i}\wedge d\eta_{i},
\end{equation*}
as in \cite[Remark~5.18]{nues3Sas}. A key property is that it has parallel torsion ($\nabla^cT^c=0$). Take into account  that, for dimension of the homogeneous 3-Sasakian manifold strictly bigger than 7,   the Levi-Civita connection, the characteristic connection of any of the involved Sasakian structures and $\nabla^c$ are the only invariant affine connections with parallel skew-torsion (\cite[Theorem~5.7]{nues3Sas}).

The related  bilinear map $\alpha^c\colon\mm\times\mm\to\mm$ is
$$
\alpha^c=\alpha^g+\sum_{r=1}^3\alpha_{rr},
$$
for $\alpha_{rr}$  the skew-symmetric bilinear map defined in Eq.~\eqref{def_alfas}.
\end{example}

\begin{example}\label{ex_dist}
Let   $\mathcal S=\{\xi_{\tau} ,\eta_{\tau}, \varphi_{\tau}\}_{\tau\in \mathbb{S}^{2}}$ be  a 3-Sasakian structure on 
   a  3-Sasakian homogeneous manifold $(M,g)$ of dimension at least $7$.
The unique $G$-invariant affine connection    with skew-torsion on $M$  satisfying that   $\xi_\tau$ is parallel for any $\tau\in \mathbb{S}^{2}$, is denoted by
 $\nabla^\mathcal{S}$. The related bilinear map is given by
 $$
\alpha^\mathcal{S}= \alpha^g+2\alpha_o+ \sum_{r=1}^3 \alpha_{rr}. 
$$ 
The above connection $\nabla^\mathcal{S}$ admits trivially a generalization for  not necessarily homogeneous 3-Sasakian manifolds, and   then $\nabla^\mathcal{S}$ becomes
\begin{itemize}
\item  Einstein   with skew-torsion, with symmetric Ricci tensor, if $\dim M=7$;
\item $\mathcal S$-Einstein, for arbitrary dimension.
\end{itemize}
The concept of {Einstein   with skew-torsion} is introduced in \cite{AgriFerr}, where it is proved that the metric connections   
such that $(M,g,\nabla)$ is Einstein with skew-torsion are the critical points  of certain variational problem. 
 An affine connection with totally skew-symmetric torsion is \emph{Einstein with skew-torsion} if the symmetric part of the Ricci tensor is multiple of the metric, while is \emph{$\mathcal S$-Einstein} \cite[Definition~5.2]{nues3Sas} when the Ricci tensor is  multiple of the metric both in the horizontal and vertical distributions. There are  $\mathcal S$-Einstein invariant affine connections in any homogeneous 3-Sasakian manifold but there are invariant affine connections Einstein with skew-torsion only if the homogeneous 3-Sasakian manifold has dimension 7 \cite[Theorem~5.4i)]{nues3Sas}.
 \end{example}

Our purpose is to study the holonomy groups of $\nabla^g$, $\nabla^\mathcal S$, and $\nabla^c$, in order to figure out which of these connections is in some way better adapted to the 3-Sasakian geometry. A good sign of an affine  connection   adapted to a geometry is that the holonomy group is small.

  \section{Curvatures and holonomies}\label{se_resultados}
  
 We begin by recalling some well-known facts in order to unify the notation. Given a piecewise smooth loop $\gamma\colon[0,1]\to M$  based at a point $p\in M$, a connection $\nabla$ defines a parallel transport map $P_\gamma\colon T_pM\to T_pM$, which is both linear and invertible. The holonomy group of $\nabla$ based at $p$ is defined as
$\operatorname {Hol} _{p}(\nabla )=\{P_{\gamma }\in \mathrm {GL} (T_pM)\mid \gamma {\text{ is a loop based at }}p\}.$ In our case the holonomy group does not depend  on the basepoint (up to conjugation) since $M$ is connected, and $\operatorname {Hol}  (\nabla )=\operatorname {Hol} _{o}(\nabla )$ turns out to be a Lie group  which can be identified with a subgroup of $\GL(\mm)$.
Ambrose-Singer Theorem \cite{A-S} gives a way of computing the holonomy group in terms of the curvature tensor of the connection. 
The Lie algebra of the holonomy group $\operatorname {Hol}  (\nabla )$, denoted by $\hol(\nabla)$ and called the \emph{holonomy algebra}, turns out to be the smallest Lie subalgebra of $\gl(\mm)$ containing the curvature tensors $R^{\nabla}(X,Y)$ for any $X, Y\in \mathfrak{m}$ and closed under commutators with the    left multiplication operators $\alpha^{_\nabla}(X,.)$, for $X\in \mm$.    If   $\nabla$ is compatible with the (invariant) metric $g$, we already mentioned that $\alpha^{_\nabla}(X,.)\in\sof(\mm,g)$ for all $X\in\mm$, so that $R^{\nabla}(X,Y)\in\sof(\mm,g)$ too, and hence the holonomy algebra is a subalgebra of the orthogonal Lie algebra.  

Our main tool in this section will be the complexification, since the complex Lie algebra $\hol(\nabla)^\CC$ is the smallest Lie subalgebra of $\gl(\mm^\CC)$ containing the curvature maps for any $X, Y\in \mathfrak{m}^\CC$,
\begin{equation}\label{eq_R}
R^{\nabla}(X,Y)=[\alpha^{_\nabla}_X,\alpha^{_\nabla}_Y]-\alpha^{_\nabla}_{[X,Y]_{\mm}}-\ad[X,Y]_{\hh},
\end{equation} 
  and closed under commutators with the operators of left multiplications $\alpha^{_\nabla}_X=\alpha^{_\nabla}(X,.)$ with $X\in \mm^\CC$  (we use the same notation for $R^{\nabla}$, $\alpha^{_\nabla}$, $[.,.]_{\mm}$ and $[.,.]_{\hh}$,  to avoid complicating the notation).    Our setting is $\mm^\CC=\spf(V,\langle.,.\rangle)\oplus\, V\otimes T$ and $\hh^\CC= \inder(T)$, for $T$ a complex symplectic triple system and  $(V,\langle.,.\rangle)$ a two-dimensional complex vector space endowed with a nonzero alternating bilinear form. This makes very easy to compute the curvature maps. 
From Eq.~\eqref{eq_sympproducto}, the projections of the bracket on $\hh^\CC$ and $\mm^\CC$  are
 $$
 \begin{array}{l}
  [\xi+a\otimes x,\xi'+b\otimes y]_\hh=\langle a,b\rangle d_{x,y} ,
 \\
  {[}\xi+a\otimes x,\xi'+b\otimes y]_\mm=
[\xi,\xi']+(x,y)\gamma_{a,b}+\xi(b)\otimes y-\xi'(a)\otimes x,
  \end{array}
 $$ 
for any $\xi,\xi'\in  \spf(V,\langle.,.\rangle)$, $a,b\in V$, $x,y\in T$.
(Recall the definitions of $\gamma_{a,b}\in  \spf(V,\langle.,.\rangle) $ and $d_{x,y}\in\inder(T) $ in Section~\ref{sec_background}.)
  
  \subsection{ The Levi-Civita connection}\label{subsec_LC}
  We enclose this case for completeness, and for providing an unified treatment, including the algebraical expressions of the curvature operators. The fact that the   holonomy group is general is well-known (recall the words -mentioned in the Introduction- of Boyer and Galicki in \cite{libroGB}, or see, for instance, \cite[Corollary~14.1.9]{libroGB}) but we have not been able to find a direct and explicit proof in the literature.

 \begin{proposition}\label{prR^g}
After complexifying,  the curvature operators become
$$
\begin{array}{l}
R^g(\xi,\xi') (\xi''+a\otimes x)=-\frac14[[\xi,\xi'],\xi''],\vspace{3pt}\\
R^g(a\otimes x, \xi)(\xi'+b\otimes y)=
-\frac12(x,y)\langle a,b\rangle \xi+g(\xi,\xi')a\otimes x,\vspace{4pt}\\
R^g(a\otimes x, b\otimes y)(\xi+c\otimes z)=\frac{\gamma_{a,c}(b)\otimes (x,z)y-\gamma_{b,c}(a)\otimes (y,z)x}2-\langle a,b \rangle c\otimes [x,y,z],
\end{array}
$$
for any $\xi,\xi',\xi''\in  \spf(V,\langle.,.\rangle)$, $a,b,c\in V$, $x,y,z\in T$.
\end{proposition}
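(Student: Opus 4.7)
The plan is to plug the explicit form of $\alpha^g$ from Example~\ref{ex_LC} into the Nomizu curvature formula \eqref{eq_R} and simplify, exploiting that $\alpha^g$ vanishes on the mixed pair $\spf(1)\times\g_{\bar1}$. Under the identification $\mm^\CC=\spf(V,\langle.,.\rangle)\oplus V\otimes T$, this vanishing kills many of the terms appearing in $R^g$. By bilinearity it suffices to verify each displayed formula on pure homogeneous inputs, so I would handle separately the three cases (i) both first arguments vertical, (ii) one vertical and one horizontal, and (iii) both horizontal, decomposing the third argument into its vertical and horizontal parts and using the explicit graded brackets of $\g(T)$ recalled at the start of Section~\ref{se_resultados}.

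Case (i) is dispatched first: for $X=\xi$, $Y=\xi'$ one has $[\xi,\xi']_\mm=[\xi,\xi']\in\spf(V,\langle.,.\rangle)$, $[\xi,\xi']_\hh=0$, and only the restriction $\alpha^g_\xi=\tfrac12\ad\xi$ on $\spf(V,\langle.,.\rangle)$ acts non-trivially; a direct expansion and the Jacobi identity give the first formula, with $R^g(\xi,\xi')(a\otimes x)=0$ automatic from the vanishing of $\alpha^g_\xi$ on horizontal elements.

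For case (ii) with $X=a\otimes x$, $Y=\xi$, on a vertical input $\xi'$ the only surviving summand of $[\alpha^g_X,\alpha^g_Y]$ (since $\alpha^g_\xi$ kills horizontal elements) combines with the $\alpha^g_{[X,Y]_\mm}$ piece to produce $-\tfrac12(\xi\xi'+\xi'\xi)(a)\otimes x$. Here I invoke Cayley--Hamilton in $\spf(V,\langle.,.\rangle)\cong\slf_2$: any $\zeta\in\slf_2$ satisfies $\zeta^2=\tfrac12\tr(\zeta^2)\,\mathrm{id}_V$, whence $\xi\xi'+\xi'\xi=\tr(\xi\xi')\,\mathrm{id}_V$, and the normalization \eqref{eq_nuestrag} forces this scalar to equal $-2g(\xi,\xi')$, producing the desired $g(\xi,\xi')\,a\otimes x$ term. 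On a horizontal input $b\otimes y$ a parallel expansion yields $\tfrac14(x,y)(\gamma_{\xi(a),b}-\gamma_{a,\xi(b)})$, which I would rewrite as $-\tfrac12(x,y)\langle a,b\rangle\xi$ via the identity $\gamma_{\xi(a),b}-\gamma_{a,\xi(b)}=-2\langle a,b\rangle\xi$, easily verified on a symplectic basis of $V$.

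Case (iii) is the richest. On a horizontal third argument $c\otimes z$, $[a\otimes x,b\otimes y]_\mm=(x,y)\gamma_{a,b}$ is vertical so $\alpha^g$ of it kills $c\otimes z$, while $[a\otimes x,b\otimes y]_\hh=\langle a,b\rangle d_{x,y}$ contributes $-\langle a,b\rangle c\otimes[x,y,z]$ through $d_{x,y}\cdot z=[x,y,z]$, and the commutator $[\alpha^g_X,\alpha^g_Y]$ gives $\tfrac12(\gamma_{a,c}(b)\otimes(x,z)y-\gamma_{b,c}(a)\otimes(y,z)x)$. On a vertical third argument $\xi$ everything must cancel, and this is the step I expect to be the main technical obstacle: the cancellation combines $[\xi,\gamma_{a,b}]=\gamma_{\xi(a),b}+\gamma_{a,\xi(b)}$ with the symmetry $\gamma_{u,v}=\gamma_{v,u}$, and demands careful bookkeeping of the $\mm$- and $\hh$-projections together with the asymmetric coefficients of $\alpha^g$ (note that $\alpha^g_{a\otimes x}(\xi')=[a\otimes x,\xi']_\mm$ carries no $\tfrac12$ factor, unlike the other cases of Example~\ref{ex_LC}), which is the principal source of potential sign errors throughout.
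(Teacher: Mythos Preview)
Your plan is essentially the paper's own proof: both plug $\alpha^g$ from Example~\ref{ex_LC} into the Nomizu formula \eqref{eq_R}, split into the three cases according to the $\ZZ_2$-grading, and reduce everything to the two auxiliary identities $\xi\xi'+\xi'\xi=-2g(\xi,\xi')\id_V$ and $\gamma_{\xi(a),b}-\gamma_{a,\xi(b)}=-2\langle a,b\rangle\xi$. The only differences are cosmetic: you obtain the first identity via Cayley--Hamilton whereas the paper reads it off the explicit orthonormal basis \eqref{eq_losxis}, and you verify the second on a symplectic basis whereas the paper derives it from the three-term relation \eqref{eq_key}. One small imprecision: invoking \eqref{eq_nuestrag} does not by itself give $\tr(\xi\xi')=-2g(\xi,\xi')$, since that equation relates $g$ to the Killing form of $\g$, not to the trace on $V$; what you actually need is that the $\xi_i$ are $g$-orthonormal together with the direct computation $\tr(\xi_i\xi_j)=-2\delta_{ij}$ from \eqref{eq_losxis}, which is exactly what the paper does.
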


\begin{proof}
These computations are straightforward, by taking in mind Eq.~\eqref{eq_alfadeLevi}. First, we compute
$$
\begin{array}{l}
R^g(a\otimes x, b\otimes y)(c\otimes z)\\
=\alpha^g_{a\otimes  x}\big(\frac12(y,z)\gamma_{b,c}\big)
-\alpha^g_{b\otimes  y}\big(\frac12(x,z)\gamma_{a,c}\big)
-\alpha^g_{(x,y)\gamma_{a,b}}\big(c\otimes z)-[\langle a,b \rangle d_{x,y},c\otimes z]\\
=-\frac12(y,z)\gamma_{b,c}(a)\otimes x+\frac12(x,z)\gamma_{a,c}(b)\otimes y-0-\langle a,b \rangle c\otimes [x,y,z];
\end{array}
$$
and also
$$
\begin{array}{l}
R^g(a\otimes x, b\otimes y)(\xi)= 
\alpha^g_{a\otimes  x}\big( -\xi(b)\otimes y\big)
-\alpha^g_{b\otimes  y}\big( -\xi(a)\otimes x\big)
-\alpha^g_{(x,y)\gamma_{a,b}}\big( \xi\big)-0\\
=-\frac12(x,y)\gamma_{a,\xi(b)}+\frac12(y,x)\gamma_{b,\xi(a)}-(x,y)\frac12[\gamma_{a,b},\xi]\\
=\frac12(x,y)\big( -\gamma_{a,\xi(b)} -\gamma_{b,\xi(a)}+[\xi,\gamma_{a,b}] \big)=0.
\end{array}
$$
Another curvature operator is
$$
\begin{array}{l}
R^g(\xi,\xi') (\xi''+a\otimes x)=\alpha^g_\xi\big(\frac12[\xi',\xi'']\big)-\alpha^g_{\xi'}\big(\frac12[\xi,\xi'']\big)-\alpha^g ( [\xi,\xi'] ,\xi''+a\otimes x)\\
=\frac14\big( [\xi,[\xi',\xi'']]-[\xi',[\xi,\xi'']]  \big)-\frac12[[\xi,\xi'],\xi'']=\big(\frac14-\frac12\big)[[\xi,\xi'],\xi''].
\end{array}
$$
Finally, recall $\alpha_\xi^g(\g_{\bar1})=0$ to get
$$
\begin{array}{l}
R^g(a\otimes x, \xi)(\xi'+b\otimes y)
=\alpha^g_{a\otimes x}(\frac12[\xi,\xi'])-\alpha^g_{\xi}(\frac12(x,y)\gamma_{a,b})
-\alpha^g_{-\xi(a)\otimes x}(\xi'+b\otimes y)\\
=-\frac12[\xi,\xi'](a)\otimes x-\frac14(x,y)[\xi,\gamma_{a,b}]-\xi'\xi(a)\otimes x+\frac12(x,y)\gamma_{\xi(a),b}
\vspace{3pt}\\
=-\frac12(\xi\xi'+\xi'\xi)(a)\otimes x+\frac{(x,y)}2(\frac{-\gamma_{\xi(a),b}-\gamma_{a,\xi(b)}}2+\gamma_{\xi(a),b})\vspace{3pt}\\
=g(\xi,\xi')a\otimes x+\frac14(x,y)(\gamma_{\xi(a),b}-\gamma_{a,\xi(b)}).
\end{array}
$$
In the last step we have used    $\xi\xi'+\xi'\xi=-2g(\xi,\xi')\id_V$ by Eq.~\eqref{eq_losxis}, because two different elements in the orthonormal basis $\{\xi_i\}_{i=1}^3$ anticommute. 
For simplifying the  expression, note that,
 for any $a,b,c\in V$,
 \begin{equation}\label{eq_key}
\langle a,b\rangle c+\langle b,c\rangle a+\langle c,a\rangle b=0,
 \end{equation}
 because $\dim V=2$ (we can assume $c\in\{a,b\}$ by trilinearity) and $\langle .,.\rangle $ is alternating. 
 Thus, using $\langle \xi(a),c\rangle +\langle a,\xi(c)\rangle =0$ when $\xi$ belongs to the symplectic Lie algebra, and Eq.~\eqref{eq_key},
 $$
 \begin{array}{l}
 \big(\gamma_{\xi(a),b}-\gamma_{a,\xi(b)}\big)(c)=\langle \xi(a),c\rangle b+\langle b,c\rangle \xi(a)-\langle a,c\rangle \xi(b)-\langle \xi(b),c\rangle a\\
 =-\langle a,\xi(c)\rangle b+\xi\big( \langle b,c\rangle a-\langle a,c\rangle b\big)+\langle b,\xi(c)\rangle a\\
 =\langle \xi(c),a\rangle b+\xi(-\langle a,b\rangle c)+\langle b,\xi(c)\rangle a= -\langle a,b\rangle \xi(c)-\langle a,b\rangle \xi(c).
 \end{array}
$$
So $\gamma_{\xi(a),b}-\gamma_{a,\xi(b)}=-2\langle a,b\rangle \xi$.
\end{proof}
  
  Let us introduce some convenient notation. Take $
  \varphi_{a,b}:=g( a,.) b-g( b,.) a.
  $ As $g\colon \mm\times \mm\to\RR$ is a bilinear symmetric map, then $ \varphi_{a,b}$ belongs to $\sof(\mm,g)$. Moreover, these maps span the whole orthogonal Lie algebra, i.e., $\sof(\mm,g)=\{\sum_i \varphi_{a_i,b_i}:
  a_i,b_i\in\mm\}\equiv\varphi_{\mm,\mm}$.
   If $\mm=\mm_{\bar0}\oplus\mm_{\bar1}$ is a  decomposition as a sum of vector subspaces,   the general Lie algebra $\gl(\mm)$ admits a $\ZZ_2$-grading
  $\gl(\mm)=\gl(\mm)_{\bar0}\oplus\gl(\mm)_{\bar1}$ where 
  $\gl(\mm)_{\bar i}=\{f\in \gl(\mm):f(\mm_{\bar j})\subset \mm_{\bar i+\bar j}\,\forall\bar j=\bar0,\bar1\}$ for any ${\bar i}\in\ZZ_2$.
  If such vector space decomposition is orthogonal for $g$, then the orthogonal Lie algebra
  $\sof(\mm,g)$ inherits the $\ZZ_2$-grading,
  $\sof(\mm,g)=\sof(\mm,g)_{\bar0}\oplus\sof(\mm,g)_{\bar1}$, being 
  $\sof(\mm,g)_{\bar i}=\gl(\mm)_{\bar i}\cap \sof(\mm,g)$.
  Then it is clear that 
  \begin{equation}\label{eq_grading}
  \sof(\mm,g)_{\bar 0}=\varphi_{\mm_{\bar0},\mm_{\bar0}}+\varphi_{\mm_{\bar1},\mm_{\bar1}},\qquad
  \sof(\mm,g)_{\bar 1}=\varphi_{\mm_{\bar0},\mm_{\bar1}}.
\end{equation}

  We are going to apply the above to
  $$\mm_{\bar0}=\spf(1)=\g_{\bar0}\cap \mm \quad\textrm{ and }\quad\mm_{\bar1}= \g_{\bar1},$$
  since they are orthogonal relative to the Killing form, and then, orthogonal for $g$.
  An interesting remark is that $\alpha^g_{\mm_{\bar 0}}\subset \gl(\mm)_{\bar 0}$ and 
  $\alpha^g_{\mm_{\bar 1}}\subset \gl(\mm)_{\bar 1}$. Note also that 
  $[\mm_{\bar i},\mm_{\bar j}]\subset \g_{\bar i+\bar j}$, 
  and that $\pi_\hh,\pi_\mm\colon\g\to\g$ are grade preserving maps, so that
  $R^g(\mm_{\bar i},\mm_{\bar j},\mm_{\bar k})\subset\mm_{\bar i+\bar j+\bar k}$, or equivalently,
  $R^g(\mm_{\bar i},\mm_{\bar j})\subset\gl(\mm)_{\bar i+\bar j}$.
  Thus, as  the curvature operators are orthogonal maps, then
  $R^g(\mm_{\bar i},\mm_{\bar j})\subset  \sof(\mm,g)_{\bar i+\bar j}$.\smallskip

With the introduced notation, the condition, for a Riemannian manifold $(M,g)$, of being a Sasakian manifold is equivalent to the existence of a Killing vector field $\xi$ of unit length so that $\nabla^g_X\varphi=-\varphi_{X,\xi}$, and  is also equivalent to the existence of a Killing vector field $\xi$ of unit length such that $R^g(X,\xi)=-\varphi_{X,\xi}$, according to \cite[Proposition~1.1.2]{BG}. This is the main clue in the next theorem. Taking the advantage we have computed the concrete expressions of the curvature operators in the above proposition, we can check directly   that, for any $\xi,\xi'\in  \spf(V,\langle.,.\rangle)$, $a\in V$, $x\in T$, 
\begin{align}\label{eq_operadoresbuenos2}
R^g(\xi, \xi')=-\varphi_{\xi,\xi'},\\\label{eq_operadoresbuenos}
  R^g(a\otimes x, \xi)=-\varphi_{ a\otimes x,\xi}.
  \end{align}

Indeed,   $R^g(\xi, \xi')\vert_{\g_{\bar1}}=0=-\varphi_{\xi,\xi'}\vert_{\g_{\bar1}}$, and both $R^g(\xi_i, \xi_{i+1})\vert_{\spf(1)}=-\frac12\ad\xi_{i+2}$ and  $-\varphi_{\xi_i, \xi_{i+1}}=g( \xi_{i+1},.) \xi_{i}-g( \xi_{i},-) \xi_{i+1}$
send
$$\xi_{i}\to\xi_{i+1},\qquad \xi_{i+1}\to-\xi_{i},\qquad \xi_{i+2}\to 0.$$ 
In order to prove \eqref{eq_operadoresbuenos}, we would need to know if the
complexification of the Killing form restricted to the odd part of the grading is
$\kappa(a\otimes x,b\otimes y)=-4(n+2)\,\langle a,b\rangle\, (x,y)$,
because
$-\varphi_{ a\otimes x,\xi}(\xi'+b\otimes y)=
g(\xi,\xi')a\otimes x+\frac1{8(n+2)}\kappa(a\otimes x,b\otimes y)\xi.$
Without doing explicit computations on traces,
classical arguments of representation theory provides the existence of $s\in\CC^\times$ such that 
  \begin{equation}\label{le_kappa}
  \kappa(a\otimes x,b\otimes y)=s\,\langle a,b\rangle\, (x,y)
  \end{equation}
  for all $a,b\in V$, $x,y\in T$. By the associativity of $\kappa$, 
the restriction $\kappa\colon{\g_{\bar1}^\CC}\times {\g_{\bar1}^\CC}\to\CC$ is a ${\g_{\bar0}^\CC}$-invariant bilinear symmetric form, which permits to identify ${\g_{\bar1}^\CC}$ with $({\g_{\bar1}^\CC})^*$. Also the map   given   by $(a\otimes x,b\otimes y)\to(x,y)\langle a,b\rangle $ is a nonzero ${\g_{\bar0}^\CC}$-invariant bilinear symmetric form, which provides another identification between ${\g_{\bar1}^\CC}$ with $({\g_{\bar1}^\CC})^*$, and hence, when composing, an element in $\hom_{\g_{\bar0}^\CC}(\g_{\bar1}^\CC,\g_{\bar1}^\CC)$, which coincides with  $\CC\id$ by Schur's Lemma (\cite[Lemma~3.4]{nues3Sas}), since $\g_{\bar1}^\CC$ is an irreducible $\g_{\bar0}^\CC$-module. This provides the required $s$.
Consider now the map $\rho= R^g(a\otimes x, \xi)+\varphi_{ a\otimes x,\xi}\in \sof(\mm^\CC,g)$, which satisfies $\rho\vert_{\spf(V,\langle.,.\rangle)}=0$ and
 $\rho\vert_{V\otimes T}=\left(\frac{4(n+2)}{s}+1\right)\varphi_{ a\otimes x,\xi}$. Hence we have
 $$
 0=g(\rho(\xi),b\otimes y)+g(\xi,\rho(b\otimes y))=\left(\frac{4(n+2)}{s}+1\right)g(a\otimes x,b\otimes y)g(\xi,\xi),
 $$
 which implies $s=-4(n+2)$, $\rho=0$, and Eq.~\eqref{eq_operadoresbuenos}.
 Now, it is not difficult to find the holonomy algebra.

 \begin{theorem}\label{teo_holg}
The complexification of the holonomy Lie algebra  of the Levi-Civita connection is
$
\hol(\nabla^g)^\CC=\sof(\mm^\CC,g);
$
so that
$$\hol(\nabla^g)=\sof(\mm,g).$$
\end{theorem}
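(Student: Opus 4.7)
Since $\nabla^g$ is metric, the containment $\hol(\nabla^g)^\CC\subset\sof(\mm^\CC,g)$ is automatic, so the task is the reverse inclusion. The orthogonal algebra is spanned by the operators $\varphi_{u,v}$ with $u,v\in\mm^\CC$, and my plan is to exhibit every such $\varphi_{u,v}$ inside $\hol(\nabla^g)^\CC$. Using the decomposition \eqref{eq_grading} attached to $\mm_{\bar 0}^\CC=\spf(V,\langle.,.\rangle)$ and $\mm_{\bar 1}^\CC=V\otimes T$, this breaks into three pieces. Identity \eqref{eq_operadoresbuenos2} puts $\varphi_{\mm_{\bar 0}^\CC,\mm_{\bar 0}^\CC}$ directly in the holonomy, and identity \eqref{eq_operadoresbuenos} covers the whole odd component $\sof(\mm^\CC,g)_{\bar 1}=\varphi_{\mm_{\bar 0}^\CC,\mm_{\bar 1}^\CC}$.

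Only the piece $\varphi_{\mm_{\bar 1}^\CC,\mm_{\bar 1}^\CC}$ is still missing, and here I would invoke that $\hol(\nabla^g)^\CC$ is, by definition, a Lie subalgebra of $\gl(\mm^\CC)$ and hence closed under its own bracket. The standard commutator identity in $\sof(\mm^\CC,g)$,
$$
[\varphi_{u,v},\varphi_{w,z}]=g(u,z)\,\varphi_{w,v}+g(v,w)\,\varphi_{z,u}-g(u,w)\,\varphi_{z,v}-g(v,z)\,\varphi_{w,u},
$$
specialized to $u=a\otimes x$, $w=b\otimes y$, $v=\xi$, $z=\xi'$ and combined with the orthogonality $g(\spf(V,\langle.,.\rangle),V\otimes T)=0$, collapses to
$$
[\varphi_{a\otimes x,\xi},\varphi_{b\otimes y,\xi'}]=-g(a\otimes x,b\otimes y)\,\varphi_{\xi',\xi}-g(\xi,\xi')\,\varphi_{b\otimes y,a\otimes x}.
$$
Choosing $\xi=\xi'$ with $g(\xi,\xi)\ne 0$ (for instance $\xi=\xi_1$) kills the first summand, so the bracket equals $-g(\xi,\xi)\,\varphi_{b\otimes y,a\otimes x}$; bilinearity in $a\otimes x$ and $b\otimes y$ then produces the entire $\varphi_{\mm_{\bar 1}^\CC,\mm_{\bar 1}^\CC}$.

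Having generated all three graded components, I conclude $\hol(\nabla^g)^\CC=\sof(\mm^\CC,g)$, and descent from $\CC$ to $\RR$ is an automatic dimension count of real forms, yielding $\hol(\nabla^g)=\sof(\mm,g)$. I foresee no serious obstacle: the two identities \eqref{eq_operadoresbuenos2} and \eqref{eq_operadoresbuenos} do most of the work, and the remaining $\varphi_{\mm_{\bar 1}^\CC,\mm_{\bar 1}^\CC}$ component is extracted by a single commutator computation after selecting a vertical vector $\xi$ with $g(\xi,\xi)\neq 0$, whose existence is guaranteed by the non-degeneracy of $g$ on $\spf(V,\langle.,.\rangle)$.
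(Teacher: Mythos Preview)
Your proposal is correct and follows essentially the same route as the paper: both arguments use \eqref{eq_operadoresbuenos2} and \eqref{eq_operadoresbuenos} to place $\varphi_{\mm_{\bar0}^\CC,\mm_{\bar0}^\CC}$ and $\sof(\mm^\CC,g)_{\bar1}$ inside the holonomy, and then extract $\varphi_{\mm_{\bar1}^\CC,\mm_{\bar1}^\CC}$ from a single commutator $[\varphi_{\xi,a\otimes x},\varphi_{\xi',b\otimes y}]$. The only cosmetic difference is in isolating the horizontal term: the paper subtracts off the already-known $\varphi_{\xi',\xi}$, whereas you set $\xi=\xi'$ so that $\varphi_{\xi,\xi}=0$ kills that summand outright---a slight streamlining, but not a different strategy.
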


\begin{proof}
Recall that $\hol(\nabla^g)^\CC$ is a Lie subalgebra of $\sof(\mm^\CC,g)$.
  Equation~\eqref{eq_operadoresbuenos} implies that    $R^g(\mm^\CC_{\bar 1},\mm^\CC_{\bar0})= \sof(\mm^\CC,g)_{\bar 1}$   
by taking into account Eq.~\eqref{eq_grading},
 so that   $\sof(\mm^\CC,g)_{\bar 1}$ is contained in $\hol(\nabla^g)^\CC$. In particular, also $[\sof(\mm^\CC,g)_{\bar 1},\sof(\mm^\CC,g)_{\bar 1}]\subset\hol(\nabla^g)^\CC$. We compute
\begin{equation}\label{eq_cuentasholg}
 \begin{array}{ll}
 [\varphi_{\xi,a\otimes x},\varphi_{\xi',b\otimes y}]&=
 \varphi_{\varphi_{\xi,a\otimes x}(\xi'),b\otimes y}+\varphi_{\xi',\varphi_{\xi,a\otimes x}(b\otimes y)}\\
 &=g(\xi,\xi')\varphi_{a\otimes x,b\otimes y}-g(a\otimes x,b\otimes y)\varphi_{\xi',\xi}\in\hol(\nabla^g)^\CC.
 \end{array}
\end{equation}

But  Eq.~\eqref{eq_operadoresbuenos2}
gives $\varphi_{\xi',\xi}\subset  \hol(\nabla^g)^\CC$.
By making the sum with the map in Eq.~\eqref{eq_cuentasholg}, we also have $\varphi_{a\otimes x,b\otimes y}\in\hol(\nabla^g)^\CC$. Hence, 
$\sof(\mm^\CC,g)_{\bar 0}=\varphi_{\mm_{\bar0},\mm_{\bar0}}+\varphi_{\mm_{\bar1},\mm_{\bar1}}$ is completely contained in $ \hol(\nabla^g)^\CC$, what ends the proof.
\end{proof}

\begin{remark}
Note that   the 3-Sasakian homogeneous manifolds are $n$-Sasakian manifolds for $n=3$ in the sense proposed by \cite{nSas}, that is, $R^g(X,Y)\vert_{\spf(1)}=-\varphi_{X,Y}$ for any $X,Y\in\mm$ (since $R^g(X,Y)\vert_{\spf(1)}=0$).  

Put attention on $R^g(X,Y)\vert_{\g_{\bar1}}\ne-\varphi_{X,Y}$, except for the symplectic triple system of   symplectic type (corresponding to the sphere, well known for being of constant curvature, or to the projective space, locally undistinguishable). Indeed, for $x,y,z\in T$   and $  a,b\in V$, then   
$$
R^g(a\otimes x,a\otimes y)(b\otimes z)=-\frac12\langle a,b\rangle a\otimes\big((x,z)y-(y,z)x\big)=-\varphi_{a\otimes x,a\otimes y}(b\otimes z),
$$
taking into account Eq.~\eqref{le_kappa}.
But, for $e_1,e_2\in V$ with $\langle e_1,e_2\rangle=1$, then $\gamma_{e_1,e_1}(e_2)=2e_1$, 
$\gamma_{e_2,e_1}(e_1)=-e_1$ and 
$$
R^g(e_1\otimes x,e_2\otimes y)(e_1\otimes z)=e_1\otimes \big(
(x,z)y+\frac12(y,z)x-[x,y,z]\big),
$$
which will coincide with
$$-\varphi_{e_1\otimes x,e_2\otimes y}(e_1\otimes z)=g({e_2\otimes y,e_1\otimes z})(e_1\otimes x)=-\frac12e_1\otimes (y,z)x,
$$
if and only if the identity
$[x,y,z]=(x,z)y+ (y,z)x$ holds in $T$. Note that the identity is false for a symplectic triple system not of symplectic type. It is interesting to remark that just this difference, $[x,y,z]-((x,z)y+ (y,z)x)$, measures how far is a 3-Sasakian manifold of being of constant curvature. 
\end{remark}

\subsection{ The distinguished connection}\label{subsec_dis}

\begin{lemma}\label{le_alfaS}
If $\xi,\xi'\in  \spf(1)$, $X,Y\in \g_{\bar1}$, then
$$
\begin{array}{ll}
\alpha^\mathcal{S}(\xi,\xi')=0,&\alpha^\mathcal{S}(X,\xi_i)=0,\\
\alpha^\mathcal{S}(\xi_i,X)=-\varphi_i(X),\qquad\qquad&\alpha^\mathcal{S}(X,Y)=0.
\end{array}
$$
\end{lemma}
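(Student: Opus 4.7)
\medskip
\noindent\emph{Plan.} I would proceed by direct verification, plugging the given decomposition $\alpha^{\mathcal{S}}=\alpha^g+2\alpha_o+\sum_{r=1}^3\alpha_{rr}$ into each of the four cases and using the formulas from Example~\ref{ex_LC} for $\alpha^g$ and from \eqref{eq_torsiones} for $\alpha_o$ and $\alpha_{rs}$. By bilinearity the claim reduces to evaluation on the four cells of the splitting $\mm=\spf(1)\oplus\g_{\bar1}$, and on each cell it is enough to take a representative pair.

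For $\xi,\xi'\in\spf(1)$, the skew-symmetry of $\alpha^g$, $\alpha_o$ and $\alpha_{rr}$ restricted to $\spf(1)\times\spf(1)$ reduces matters to the pair $(\xi_i,\xi_{i+1})$, where the three summands contribute $\xi_{i+2}$, $2\xi_{i+2}$ and $-3\xi_{i+2}$ respectively, adding to zero. For $X\in\g_{\bar1}$ and $\xi_i\in\spf(1)$, the identity $\varphi_i|_{\g_{\bar1}}=\ad\xi_i$ gives $\alpha^g(X,\xi_i)=-\varphi_i(X)$, which is cancelled by $\sum_r\alpha_{rr}(X,\xi_i)=\varphi_i(X)$; the third statement $\alpha^{\mathcal{S}}(\xi_i,X)=-\varphi_i(X)$ then follows from the vanishing $\alpha^g(\spf(1),\g_{\bar1})=0$ together with the alternancy of the two remaining summands.

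The main obstacle is the last case $\alpha^{\mathcal{S}}(X,Y)=0$ for $X,Y\in\g_{\bar1}$, where the formulas produce
$$
\alpha^{\mathcal{S}}(X,Y)=\tfrac12[X,Y]_\mm+\sum_{r=1}^3 g(X,\varphi_r(Y))\,\xi_r,
$$
so genuine cancellation between the two nonzero contributions is required. Since $[\g_{\bar1},\g_{\bar1}]\subset\g_{\bar0}=\hh\oplus\spf(1)$, the projection $[X,Y]_\mm$ coincides with $[X,Y]_{\spf(1)}$, and the orthonormality of $\{\xi_i\}$ reduces the identity to showing $\tfrac12\,g([X,Y],\xi_i)=-g(X,\varphi_i(Y))$ for each $i$. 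I would derive this from the $\ad$-invariance of the Killing form, which gives $\kappa([X,Y],\xi_i)=-\kappa(X,\varphi_i(Y))$, combined with the orthogonality $\kappa(\hh,\spf(1))=0$ (valid because $\hh$ and $\spf(1)$ are commuting ideals of $\g_{\bar0}$ in the standard enveloping construction of Section~\ref{sec_background}) and the normalization~\eqref{eq_nuestrag}: the ratio $2$ between $g|_{\spf(1)}$ and $g|_{\g_{\bar1}}$ is precisely what absorbs the factor $\tfrac12$ and produces the cancellation.
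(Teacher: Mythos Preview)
Your proposal is correct and follows essentially the same route as the paper: both compute $\alpha^{\mathcal{S}}$ case-by-case from the decomposition $\alpha^{\mathcal{S}}=\alpha^g+2\alpha_o+\sum_r\alpha_{rr}$ using \eqref{eq_alfadeLevi} and \eqref{eq_torsiones}. The only difference is that the paper simply asserts the identity $\sum_r\Phi_r(X,Y)\xi_r=-\tfrac12[X,Y]_\mm$ (as a fact already available from \cite{nues3Sas}), whereas you supply the short Killing-form argument---the $\ad$-invariance of $\kappa$, the orthogonality $\kappa(\hh,\spf(1))=0$, and the ratio $2$ in the normalization \eqref{eq_nuestrag}---that proves it.
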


\begin{proof}
Note that the fact $\alpha^\mathcal{S}(.,\xi)=0$ is the condition required for the choice of the affine connection in \cite[Theorem~5.6]{nues3Sas}. Anyway, it is easy to check it directly. Indeed, as $\alpha^\mathcal{S}-\alpha^g=2\alpha_o+ \sum_{r=1}^3 \alpha_{rr}$ is alternating, so   Eq.~\eqref{eq_torsiones} gives
$$
\begin{array}{ll}
(\alpha^\mathcal{S}-\alpha^g)(\xi,\xi')= [\xi,\xi']-\frac32[\xi,\xi'],&(\alpha^\mathcal{S}-\alpha^g)(X,\xi_i)=0+\varphi_i(X)=[\xi_i,X],\\
(\alpha^\mathcal{S}-\alpha^g)(\xi_i,X)=0-\varphi_i(X),\qquad\qquad&(\alpha^\mathcal{S}-\alpha^g)(X,Y)=0+\sum_r\Phi_r(X,Y)\xi_r=-\frac12[X,Y]_\mm;
\end{array}
$$
and we finish by taking into account Eq.~\eqref{eq_alfadeLevi}.
\end{proof}

\begin{proposition}
After complexifying, the curvature operators become
$$
\begin{array}{l}
R^\mathcal{S}(\xi,\xi')(\xi''+c\otimes z)=2[\xi,\xi'](c)\otimes z,\\
R^\mathcal{S}(a\otimes x, \xi)=0,\\
R^\mathcal{S}(a\otimes x, b\otimes y)(\xi+c\otimes z)=\gamma_{a,b}(c)\otimes (x,y)z-\langle a,b\rangle c\otimes [x,y,z],
\end{array}
$$
for any $\xi,\xi',\xi''\in  \spf(V,\langle.,.\rangle)$, $a,b,c\in V$, $x,y,z\in T$.
\end{proposition}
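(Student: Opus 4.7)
The proof plan is a direct computation using the formula in Eq.~\eqref{eq_R}, with the bracket decompositions $[\cdot,\cdot]_\mm$ and $[\cdot,\cdot]_\hh$ stated just before Section~\ref{subsec_LC}, and with the values of $\alpha^{\mathcal S}$ made completely explicit. The only real preparatory step is to translate Lemma~\ref{le_alfaS} to the complexified picture $\mm^\CC=\spf(V,\langle.,.\rangle)\oplus V\otimes T$: since $\varphi_i|_{\g_{\bar1}}=\ad\xi_i$, the lemma becomes $\alpha^{\mathcal S}(\xi,\xi')=0$, $\alpha^{\mathcal S}(a\otimes x,\xi)=0$, $\alpha^{\mathcal S}(a\otimes x,b\otimes y)=0$, and $\alpha^{\mathcal S}(\xi,a\otimes x)=-\xi(a)\otimes x$. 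In particular $\alpha^{\mathcal S}_{a\otimes x}\equiv 0$, a fact that collapses several commutator terms to zero.

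For the first case, with $\xi,\xi'\in\spf(V,\langle.,.\rangle)$, I note that $[\xi,\xi']\in\spf(V,\langle.,.\rangle)\subset\mm^\CC$, so $[\xi,\xi']_{\hh}=0$ and $[\xi,\xi']_{\mm}=[\xi,\xi']$. Then $[\alpha^{\mathcal S}_\xi,\alpha^{\mathcal S}_{\xi'}]$ vanishes on $\spf(V,\langle.,.\rangle)$ and acts on $c\otimes z$ as $[\xi,\xi'](c)\otimes z$, while $\alpha^{\mathcal S}_{[\xi,\xi']}$ contributes $-[\xi,\xi'](c)\otimes z$ on the odd part. Subtracting gives the stated $2[\xi,\xi'](c)\otimes z$.

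For the second case, using $[a\otimes x,\xi]_\hh=0$ and $[a\otimes x,\xi]_\mm=-\xi(a)\otimes x$, every term in \eqref{eq_R} is zero: $[\alpha^{\mathcal S}_{a\otimes x},\alpha^{\mathcal S}_\xi]=0$ because $\alpha^{\mathcal S}_{a\otimes x}=0$; the middle term is $\alpha^{\mathcal S}_{-\xi(a)\otimes x}=0$ by the same reason; the last term is zero because $[a\otimes x,\xi]_\hh=0$.

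For the third case, with both arguments in $V\otimes T$, the bracket formulas give $[a\otimes x,b\otimes y]_\mm=(x,y)\gamma_{a,b}$ and $[a\otimes x,b\otimes y]_\hh=\langle a,b\rangle d_{x,y}$. The commutator $[\alpha^{\mathcal S}_{a\otimes x},\alpha^{\mathcal S}_{b\otimes y}]$ vanishes, the middle term $-\alpha^{\mathcal S}_{(x,y)\gamma_{a,b}}$ evaluated at $\xi+c\otimes z$ yields $(x,y)\gamma_{a,b}(c)\otimes z$, and for the last term I use that $\hh^\CC=\inder(T)$ and $\spf(V,\langle.,.\rangle)$ commute in $\g(T)$ (they are complementary ideals of $\g(T)_{\bar0}$), so $\ad d_{x,y}$ kills $\xi$ and acts on $c\otimes z\in V\otimes T$ purely on the $T$ factor as $c\otimes[x,y,z]$. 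Combining yields the announced expression. The only place calling for genuine care is the sign in $\alpha^{\mathcal S}(\xi,a\otimes x)=-\xi(a)\otimes x$ (to be consistent with $\varphi_i=-\nabla^g\xi_i=\ad\xi_i$ on $\g_{\bar1}$) and the vanishing of $[\spf(V,\langle.,.\rangle),\inder(T)]$, which is the only structural input beyond bookkeeping; no further obstacle is expected.
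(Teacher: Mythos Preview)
Your proof is correct and follows essentially the same route as the paper: both use the general curvature formula \eqref{eq_R} together with the vanishing $\alpha^{\mathcal S}_{a\otimes x}=0$ from Lemma~\ref{le_alfaS}, and then read off each case from the bracket decompositions $[\cdot,\cdot]_\mm$, $[\cdot,\cdot]_\hh$. Your treatment is slightly more explicit (you evaluate directly on $\xi+c\otimes z$ and spell out that $[\spf(V,\langle.,.\rangle),\inder(T)]=0$), whereas the paper compresses the first case into the identity $[\alpha^{\mathcal S}_\xi,\alpha^{\mathcal S}_{\xi'}]=-\alpha^{\mathcal S}_{[\xi,\xi']}$ and leaves the third case at the operator level $R^{\mathcal S}(a\otimes x,b\otimes y)=-(x,y)\alpha^{\mathcal S}_{\gamma_{a,b}}-\langle a,b\rangle\ad d_{x,y}$; but there is no substantive difference.
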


\begin{proof}
By recalling
$R^{\mathcal{S}}(X,Y)=[\alpha^\mathcal{S}_X,\alpha^\mathcal{S}_Y]-\alpha^\mathcal{S}_{[X,Y]_{\mm}}-\ad[X,Y]_{\hh}$, and $\alpha^\mathcal{S}_{a\otimes x}=0$, then
$$
R^{\mathcal{S}}(a\otimes x,\xi)=[0,\alpha^\mathcal{S}_\xi]-\alpha^\mathcal{S}_{-\xi(a)\otimes x}=0,
$$
and
$$
R^{\mathcal{S}}(a\otimes x,b\otimes y)=0-\alpha^\mathcal{S}_{(x,y)\gamma_{a,b}}-\ad{\langle a,b\rangle d_{x,y}}=-(x,y)\alpha^\mathcal{S}_{\gamma_{a,b}}-\langle a,b\rangle \ad{ d_{x,y}}.
$$
Finally we get  $[\alpha^\mathcal{S}_{\xi},\alpha^\mathcal{S}_{\xi'}]=-\alpha^\mathcal{S}_{[\xi,\xi']}$, because both maps are zero in the vertical part and $[\alpha^\mathcal{S}_{\xi},\alpha^\mathcal{S}_{\xi'}]\vert_{V\otimes T}=[\ad\xi,\ad\xi']=\ad[\xi,\xi']$. Thus $R^{\mathcal{S}}(\xi,\xi')=-2\alpha^\mathcal{S}_{[\xi,\xi']}$.
\end{proof}

\begin{theorem}
The complexification of the holonomy Lie algebra of the distinguished affine connection  is
$$
\begin{array}{rcl}
\hol(\nabla^\mathcal{S})^\CC&=
&\{f\in \gl(\mm^\CC):\exists \xi\in  \spf(V,\langle.,.\rangle)\textrm{ with }f\vert_{\spf(V,\langle.,.\rangle)}=0, f\vert_{V\otimes T}=\xi\otimes \id_T\}\\
&&\oplus\{f\in \gl(\mm^\CC):\exists d\in  \inder(T)\textrm{ with }f\vert_{\spf(V,\langle.,.\rangle)}=0, f\vert_{V\otimes T}=\id_V\otimes  d\}\\
&\cong& \slf_2(\CC)\oplus\inder(T);
\end{array}
$$ 
so that
$$\hol(\nabla^\mathcal{S})\cong \suf(2)\oplus\hh.$$
\end{theorem}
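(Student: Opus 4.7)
The plan is to set $\mathfrak L=\mathfrak L_1\oplus\mathfrak L_2$ to be the candidate subspace displayed in the statement --- those operators on $\mm^\CC$ that annihilate the vertical summand $\spf(V,\langle.,.\rangle)$ and act on $V\otimes T$ either as $\xi\otimes\id_T$ with $\xi\in\spf(V,\langle.,.\rangle)$ (giving $\mathfrak L_1$) or as $\id_V\otimes d$ with $d\in\inder(T)$ (giving $\mathfrak L_2$) --- and to prove $\hol(\nabla^\mathcal{S})^\CC=\mathfrak L$ by two inclusions. First I would verify that $\mathfrak L$ is a Lie subalgebra of $\gl(\mm^\CC)$ isomorphic to $\slf_2(\CC)\oplus\inder(T)$: each $\mathfrak L_i$ closes under commutators through the brackets of $\spf(V,\langle.,.\rangle)$ and $\inder(T)$ respectively, while $[\mathfrak L_1,\mathfrak L_2]=0$ because one summand acts only on the first tensor factor of $V\otimes T$ and the other only on the second.

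Next I would establish the inclusion $\hol(\nabla^\mathcal{S})^\CC\subseteq\mathfrak L$ from the Ambrose--Singer description in \eqref{eq_R}. The three curvature formulas of the previous proposition display each $R^\mathcal{S}(X,Y)$ as a sum of an $\mathfrak L_1$-contribution (the $2[\xi,\xi']\otimes\id_T$, respectively the $(x,y)\gamma_{a,b}\otimes\id_T$ piece) and an $\mathfrak L_2$-contribution (the $\id_V\otimes(-\langle a,b\rangle d_{x,y})$ piece), so they all already lie in $\mathfrak L$. Lemma~\ref{le_alfaS} places the left multiplications inside $\mathfrak L$ as well: $\alpha^\mathcal{S}_{a\otimes x}=0$, while $\alpha^\mathcal{S}_\xi$ kills $\spf(V,\langle.,.\rangle)$ and acts on $V\otimes T$ as $\pm\xi\otimes\id_T$. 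Since $\mathfrak L$ is a Lie algebra containing all of the generators in Ambrose--Singer, it contains $\hol(\nabla^\mathcal{S})^\CC$.

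For the reverse inclusion I would extract each $\mathfrak L_i$ from the curvature image. The operators $R^\mathcal{S}(\xi,\xi')$ realize $2[\xi,\xi']\otimes\id_T$, and since $\spf(V,\langle.,.\rangle)\cong\slf_2(\CC)$ is simple --- hence equal to its derived subalgebra --- the whole of $\mathfrak L_1$ is produced. Picking $a,b\in V$ with $\langle a,b\rangle\neq 0$, the identity
\[
R^\mathcal{S}(a\otimes x,b\otimes y)-(x,y)\,\gamma_{a,b}\otimes\id_T=-\langle a,b\rangle\,\id_V\otimes d_{x,y}
\]
exhibits $\id_V\otimes d_{x,y}\in\hol(\nabla^\mathcal{S})^\CC$ for every $x,y\in T$, after subtracting the $\mathfrak L_1$-term already obtained. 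As $\inder(T)$ is by definition the linear span of the $d_{x,y}$, the whole of $\mathfrak L_2$ is produced as well, so $\mathfrak L\subseteq\hol(\nabla^\mathcal{S})^\CC$.

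Finally, for the real statement, $\hol(\nabla^\mathcal{S})\subseteq\sof(\mm,g)$ is a compact Lie algebra whose complexification is the semisimple algebra $\slf_2(\CC)\oplus\inder(T)$; uniqueness of the compact real form of a complex semisimple Lie algebra, together with Proposition~\ref{pr_stsasociados} (which identifies $\inder(T)=\hh^\CC$) and $\slf_2(\CC)=\suf(2)^\CC$, yields $\hol(\nabla^\mathcal{S})\cong\suf(2)\oplus\hh$. The only slightly delicate point is the clean splitting in the first step --- that $\mathfrak L_1\cap\mathfrak L_2=0$ and that the two types genuinely commute --- but this is automatic from the tensor-factor description, after which the extraction of the $\mathfrak L_2$-piece goes through cleanly.
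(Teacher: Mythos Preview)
Your argument is essentially the paper's own proof, organized as an explicit double inclusion rather than the paper's single identification of the span of the curvature operators with the candidate subalgebra and a check that it is closed under bracketing with the $\alpha^\mathcal{S}_\xi$. Both proofs rely on the same ingredients: the curvature formulas of the preceding proposition, the vanishing $\alpha^\mathcal{S}_{a\otimes x}=0$, the fact that $[\spf(V),\spf(V)]=\spf(V)$, and the commutation of the $V$-action with the $T$-action on $V\otimes T$.

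There is one small gap in your final paragraph: you assert that $\slf_2(\CC)\oplus\inder(T)$ is semisimple and invoke uniqueness of compact real forms. This fails precisely when $T$ is of special type, where $\inder(T)\cong\gl(W)$ is only reductive (the paper itself remarks, just after the analogous theorem for $\nabla^c$, that in the $\SU(m)$ case the holonomy algebra has a one-dimensional center). The conclusion $\hol(\nabla^\mathcal{S})\cong\suf(2)\oplus\hh$ still holds, since any real form sitting inside the compact algebra $\sof(\mm,g)$ is itself compact, and compact real forms of a complex reductive Lie algebra are unique up to isomorphism; but your justification needs this adjustment. The paper, for its part, simply states the real conclusion without further comment.
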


\begin{proof}
The linear span of the curvature operators  
$R^{\mathcal{S}}(\xi,\xi')=-2\alpha^\mathcal{S}_{[\xi,\xi']}$ and $R^\mathcal{S}(a\otimes x, b\otimes y)=-(x,y)\alpha^\mathcal{S}_{\gamma_{a,b}}-\langle a,b\rangle\ad d_{x,y}\in\gl(\mm^\CC)$
is the vector space (which turns out to be also a  Lie algebra)
\begin{equation}\label{eq_parahol}
\{\alpha^\mathcal{S}_{\xi}:\xi\in \spf(V,\langle.,.\rangle)\}\oplus\{\ad d\vert_{\mm^\CC}:d\in  \inder(T)\}.
\end{equation}
The complexification of the holonomy algebra, $\hol(\nabla^\mathcal{S})^\CC$, becomes the smallest Lie algebra containing such set and closed with brackets with $\alpha^\mathcal{S}_{\xi}=-\ad\xi\circ \pi_{\g_{\bar1}}$ for all $\xi\in\spf(V,\langle.,.\rangle)$, since $\alpha^\mathcal{S}_{a\otimes x}=0$. As $\ad d_{x,y}$ acts on $T$, and $\ad\xi$ acts on $V$, they commute, so that
$$
[R^\mathcal{S}(a\otimes x, b\otimes y),\alpha^\mathcal{S}_{\xi}]=-(x,y)[\alpha^\mathcal{S}_{\gamma_{a,b}},\alpha^\mathcal{S}_{\xi}]=(x,y)\alpha^\mathcal{S}_{[\gamma_{a,b},\xi]}.
$$
Hence, the set in Eq.~\eqref{eq_parahol}  is closed for the required brackets and hence it coincides with the whole Lie algebra $\hol(\nabla^\mathcal{S})^\CC$.  
\end{proof}

If we compare $\hol(\nabla^\mathcal{S})$ with $\hol(\nabla^g)$, we see that this holonomy algebra is considerably smaller, what indicates that the distinguished connection is better adapted than $\nabla^g$ to the geometry of the 3-Sasakian manifolds, as expected.

 
 \subsection{ The canonical connection}\label{subsec_can}

\begin{lemma}
If $\xi,\xi'\in  \spf(1)$, $X,Y\in \g_{\bar1}$, then
$$
\begin{array}{ll}
\alpha^c(\xi,\xi')=-[\xi,\xi'],&\alpha^c(X,\xi_i)=0,\\
\alpha^c(\xi_i,X)=-\varphi_i(X),\qquad\qquad&\alpha^c(X,Y)=0.
\end{array}
$$
\end{lemma}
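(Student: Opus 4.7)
The proof is a direct verification that reduces to substituting the expression
$\alpha^c = \alpha^g + \sum_{r=1}^{3}\alpha_{rr}$
from Example~\ref{ex_canonical} into each of the four block cases of the decomposition $\mm = \spf(1)\oplus \g_{\bar1}$, using the Levi-Civita formula Eq.~\eqref{eq_alfadeLevi} for $\alpha^g$ and Eq.~\eqref{eq_torsiones} for the $\alpha_{rr}$. The structure will mirror the proof of Lemma~\ref{le_alfaS}, but now with a nontrivial contribution on the vertical part since $\alpha^c$ does not kill the Reeb directions.

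For $\xi_i, \xi_{i+1}\in \spf(1)$, I would combine $\alpha^g(\xi_i,\xi_{i+1}) = \tfrac12[\xi_i,\xi_{i+1}]_\mm = \xi_{i+2}$ with $\sum_r \alpha_{rr}(\xi_i,\xi_{i+1}) = -3\xi_{i+2}$, the three $-\delta_{rr}\xi_{i+2}$ terms from Eq.~\eqref{eq_torsiones} accumulating to $-3\xi_{i+2}$, so that the total equals $-2\xi_{i+2}=-[\xi_i,\xi_{i+1}]$; the diagonal cases $\xi=\xi'$ are immediate by alternating. For $X\in \g_{\bar1}$ and $\xi_j\in \spf(1)$, the term $\alpha^g(X,\xi_j)=[X,\xi_j]_\mm = -\varphi_j(X)$ (since $\varphi_j|_{\g_{\bar1}}=\ad\xi_j$) cancels against $\sum_r \alpha_{rr}(X,\xi_j)=\sum_r \delta_{rj}\varphi_r(X)=\varphi_j(X)$, yielding $0$. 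For $\alpha^c(\xi_i,X)$, now $\alpha^g(\xi_i,X)=0$ while the skew-symmetric correction contributes $-\sum_r\delta_{ri}\varphi_r(X)=-\varphi_i(X)$.

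The only step where I would be slightly more careful is $\alpha^c(X,Y)$ for $X,Y\in \g_{\bar1}$: this requires the identity $\sum_r \Phi_r(X,Y)\xi_r = -\tfrac12[X,Y]_\mm$, which was already verified inside the proof of Lemma~\ref{le_alfaS} and follows from the definition \eqref{def_alfas} together with the explicit form of the Killing-based metric \eqref{eq_nuestrag}. Combined with $\alpha^g(X,Y)=\tfrac12[X,Y]_\mm$, this produces $\alpha^c(X,Y)=\tfrac12[X,Y]_\mm-\tfrac12[X,Y]_\mm=0$.

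There is no real obstacle: the whole lemma is a bookkeeping of signs and of the three-fold sum, and its main purpose is to expose $\alpha^c$ in the same block format as $\alpha^\mathcal{S}$ in Lemma~\ref{le_alfaS}, so that the curvature computation and holonomy determination for $\nabla^c$ in Subsection~\ref{subsec_can} can then be carried out in exact parallel with Subsection~\ref{subsec_dis}, the only difference being the new nonzero value $\alpha^c(\xi,\xi')=-[\xi,\xi']$ in the vertical block.
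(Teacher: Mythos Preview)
Your proof is correct. The only difference from the paper is organizational: the paper observes that $\alpha^c-\alpha^{\mathcal S}=-2\alpha_o$ and then reads off the four block values directly from Lemma~\ref{le_alfaS} together with the $\alpha_o$ column of Eq.~\eqref{eq_torsiones}, which makes each case a one-line subtraction. Your route starts instead from $\alpha^c=\alpha^g+\sum_r\alpha_{rr}$ and reproduces, case by case, computations that were already absorbed into the proof of Lemma~\ref{le_alfaS}. Both arguments are elementary bookkeeping and arrive at the same formulas; the paper's version is just slightly more economical because it leverages the previous lemma rather than repeating its content.
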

 
 \begin{proof}
Simply observe that $\alpha^c-\alpha^\mathcal{S}=-2\alpha^o$, so that by Eq.~\eqref{eq_torsiones},
$$
\begin{array}{ll}
\alpha^c(\xi,\xi')=\alpha^\mathcal{S}( \xi,\xi')-[\xi,\xi'],\qquad\qquad
& \alpha^c(X,\xi)=\alpha^\mathcal{S}(X,\xi),\\
\alpha^c( \xi,X)=\alpha^\mathcal{S}( \xi,X),&\alpha^c(X,Y)=\alpha^\mathcal{S}(X,Y),
\end{array}
$$
and by  Lemma~\ref{le_alfaS} the result follows.
\end{proof}

\begin{proposition}
After complexifying,   the curvature operators become
$$
\begin{array}{l}
R^c(\xi,\xi')=2\ad [\xi,\xi']\vert_{\mm^\CC},\\
R^c(a\otimes x, \xi)=0,\\
R^c(a\otimes x, b\otimes y)=\ad\big((x,y)\gamma_{a,b}-\langle a,b\rangle d_{x,y}\big)\vert_{\mm^\CC},
\end{array}
$$
for any $\xi,\xi'\in  \spf(V,\langle.,.\rangle)$, $a,b\in V$, $x,y\in T$.
\end{proposition}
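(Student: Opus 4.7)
The plan is to substitute the values of $\alpha^c$ given by the preceding lemma into Nomizu's formula \eqref{eq_R} and read off the three cases, proceeding in parallel with the computation carried out for the distinguished connection. The key algebraic observation is that, in contrast to $\alpha^{\mathcal{S}}_\xi$ (which is zero on the vertical summand $\spf(V,\langle.,.\rangle)$), the map $\alpha^c_\xi$ coincides with $-\ad\xi|_{\mm^\CC}$ on \emph{both} summands of $\mm^\CC=\spf(V,\langle.,.\rangle)\oplus V\otimes T$: this follows from $\alpha^c(\xi,\xi')=-[\xi,\xi']$ together with $\varphi_i|_{\g_{\bar1}}=\ad\xi_i$. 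The lemma also gives $\alpha^c_{a\otimes x}=0$, exactly as in the case of $\nabla^{\mathcal{S}}$.

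For $R^c(a\otimes x,\xi)$ I would observe that each of the three summands in \eqref{eq_R} vanishes: the commutator because $\alpha^c_{a\otimes x}=0$; the $\mm$-term because $[a\otimes x,\xi]_\mm=-\xi(a)\otimes x\in V\otimes T$, on which $\alpha^c$ is also zero; and the $\hh$-term because $[a\otimes x,\xi]_\hh=0$ by the bracket formulas that follow \eqref{eq_R}.

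For $R^c(\xi,\xi')$ the bracket stays in $\spf(V,\langle.,.\rangle)\subset\mm^\CC$, so $[\xi,\xi']_\hh=0$ and $[\xi,\xi']_\mm=[\xi,\xi']$. Using $\alpha^c_\xi=-\ad\xi|_{\mm^\CC}$ on both summands collapses the first two terms of \eqref{eq_R} into $\ad[\xi,\xi']|_{\mm^\CC}-(-\ad[\xi,\xi']|_{\mm^\CC})=2\ad[\xi,\xi']|_{\mm^\CC}$. This is the sole substantive departure from the computation for $\nabla^{\mathcal{S}}$: the extra factor $2$ arises precisely because $\alpha^c_\xi$, unlike $\alpha^{\mathcal{S}}_\xi$, acts non-trivially on the vertical summand.

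For $R^c(a\otimes x,b\otimes y)$ the commutator of the two $\alpha^c$'s is zero, and by \eqref{eq_sympproducto} the $\mm$- and $\hh$-projections of the bracket are $(x,y)\gamma_{a,b}\in\spf(V,\langle.,.\rangle)$ and $\langle a,b\rangle d_{x,y}\in\inder(T)$ respectively. Replacing $\alpha^c_{\gamma_{a,b}}$ by $-\ad\gamma_{a,b}|_{\mm^\CC}$ leaves $(x,y)\ad\gamma_{a,b}|_{\mm^\CC}-\langle a,b\rangle\ad d_{x,y}|_{\mm^\CC}$, which, after factoring $\ad(\cdot)|_{\mm^\CC}$, is the stated formula. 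No real obstacle arises; the whole argument is a transparent book-keeping exercise with the lemma and the graded-bracket formulas, and is visibly shorter than the analogous computation for $\nabla^g$ in Proposition~\ref{prR^g}.
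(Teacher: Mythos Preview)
Your proposal is correct and follows essentially the same route as the paper: both proofs feed the lemma's values of $\alpha^c$ into Nomizu's formula \eqref{eq_R}, use $\alpha^c_{a\otimes x}=0$ and $\alpha^c_\xi=-\ad\xi|_{\mm^\CC}$, and read off the three cases directly. One small expository quibble: your remark that ``the extra factor $2$ arises precisely because $\alpha^c_\xi$ acts non-trivially on the vertical summand'' is slightly misleading, since the distinguished case also produces a factor $2$ (namely $R^{\mathcal S}(\xi,\xi')=-2\alpha^{\mathcal S}_{[\xi,\xi']}$); the genuine difference is not the coefficient but the fact that $R^c(\xi,\xi')$ now acts non-trivially on $\spf(V,\langle.,.\rangle)$ as well, giving the full $\ad[\xi,\xi']|_{\mm^\CC}$ rather than its restriction to $V\otimes T$.
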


\begin{proof}
First, as $[\xi,\xi']_{\mm}=[\xi,\xi']$ and $[\xi,\xi']_{\hh}=0$, then, by Eq.~\eqref{eq_R},
$R^{c}(\xi,\xi')=[\alpha^c_\xi,\alpha^c_{\xi'}]-\alpha^c_{[\xi,\xi']}=[-\ad\xi,-\ad\xi']+\ad[\xi,\xi']=2\ad[\xi,\xi']$.
Second, $\alpha^c_{a\otimes x}=0=\alpha^c_{\xi(a)\otimes x}$, what implies $R^c(a\otimes x, \xi)=0$.
And third,
$R^c(a\otimes x, b\otimes y)=0-(x,y)\alpha^c_{\gamma_{a,b}}-\langle a,b\rangle \ad d_{x,y}.$
\end{proof}

\begin{remark}
Observe some analogies between the curvature operators related to the distinguished and canonical connections: $R^\mathcal S(X,Y)\vert_{\g_{\bar1}}=R^c(X,Y)\vert_{\g_{\bar1}}$ for all $X,Y\in\mm$, although they do not coincide in the vertical part, since $R^\mathcal S(X,Y)\vert_{\spf(1)}=0$ (the same than   happens after the complexification).

Note also that the first Bianchi identity is of course not longer true for connections with torsion, because, by using Remark~\ref{re_Lts},  
\begin{center}$\sum_{\tiny
 \begin{array}{l}\textrm{cyclic} \vspace{-4pt}\\a,b,c\vspace{-4pt}\\x,y,z\end{array}} R^\mathcal S(a\otimes x, b\otimes y,c\otimes z)=2\big(\gamma_{a,b}(c)\otimes (x,y)z+\gamma_{b,c}(a)\otimes (y,z)x+\gamma_{c,a}(b)\otimes (z,x)y\big),
 $\end{center}
 which is obviously not always zero.
\end{remark}

\begin{theorem}
The complexification of the holonomy Lie algebra  of the canonical affine connection is
$$
\hol(\nabla^c)^\CC=\ad(\g(T)_{\bar0})\vert_{\mm^\CC}\cong\g(T)_{\bar0}=\spf(V,\langle.,.\rangle)\oplus \inder(T);
$$ 
and hence
$$
\hol(\nabla^c)\cong\suf(2)\oplus\hh.
$$
\end{theorem}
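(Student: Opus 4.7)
The plan is to invoke the Ambrose--Singer description recalled at the opening of this section: $\hol(\nabla^c)^\CC$ is the smallest Lie subalgebra of $\gl(\mm^\CC)$ that contains every curvature operator $R^c(X,Y)$ and is closed under commutators with the left multiplications $\alpha^c_X$, $X\in\mm^\CC$. The three formulas in the preceding proposition already show that each $R^c(X,Y)$ has the shape $\ad Z\vert_{\mm^\CC}$ for some $Z\in\g(T)_{\bar 0}=\spf(V,\langle.,.\rangle)\oplus\inder(T)$, so the proof reduces to showing that (i) $\ad(\g(T)_{\bar 0})\vert_{\mm^\CC}$ is already a Lie subalgebra of $\gl(\mm^\CC)$, (ii) it is stable under commutators with every $\alpha^c_X$, and (iii) the curvature operators linearly span it.

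For (i), recall that $\g(T)_{\bar 0}=\spf(V,\langle.,.\rangle)\oplus\inder(T)$ is a direct sum of ideals (the two factors act on disjoint tensor slots in $V\otimes T$ and commute), so $\ad(\g(T)_{\bar 0})$ preserves both summands of $\mm^\CC=\spf(V,\langle.,.\rangle)\oplus(V\otimes T)$; hence $\ad\colon\g(T)_{\bar 0}\to\gl(\mm^\CC)$ is a Lie algebra homomorphism and its image is a subalgebra. For (ii), the preceding lemma gives $\alpha^c_{a\otimes x}=0$, so the only nontrivial commutator is with $\alpha^c_\xi=-\ad\xi\vert_{\mm^\CC}$ for $\xi\in\spf(V,\langle.,.\rangle)$; but then $[\alpha^c_\xi,\ad Z\vert_{\mm^\CC}]=-\ad[\xi,Z]\vert_{\mm^\CC}$ lies back in $\ad(\g(T)_{\bar 0})\vert_{\mm^\CC}$ because $[\xi,Z]\in\g(T)_{\bar 0}$.

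For (iii), I would first use $R^c(\xi,\xi')=2\ad[\xi,\xi']\vert_{\mm^\CC}$ and the semisimplicity identity $[\spf(V,\langle.,.\rangle),\spf(V,\langle.,.\rangle)]=\spf(V,\langle.,.\rangle)$ to obtain the whole $\ad(\spf(V,\langle.,.\rangle))\vert_{\mm^\CC}$. Then, subtracting the $\spf$-part from $R^c(a\otimes x,b\otimes y)=(x,y)\ad\gamma_{a,b}\vert_{\mm^\CC}-\langle a,b\rangle\ad d_{x,y}\vert_{\mm^\CC}$ leaves $-\langle a,b\rangle\ad d_{x,y}\vert_{\mm^\CC}$; choosing $\langle a,b\rangle\ne 0$ with arbitrary $x,y\in T$ and appealing to the definition of $\inder(T)$ as the span of the $d_{x,y}$ exhausts $\ad(\inder(T))\vert_{\mm^\CC}$.

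Finally I would check that $\ad\colon\g(T)_{\bar 0}\to\gl(\mm^\CC)$ is injective: writing $Z=Z_1+Z_2$ with $Z_1\in\spf(V,\langle.,.\rangle)$, $Z_2\in\inder(T)$, the restriction $\ad Z\vert_{V\otimes T}=Z_1\otimes\id_T+\id_V\otimes Z_2$ vanishes only if, taking the partial trace over $V$ and using $\tr Z_1=0$, we get $(\dim V)\,Z_2=2Z_2=0$, whence $Z_2=0$ and then $Z_1=0$. This yields $\hol(\nabla^c)^\CC\cong\g(T)_{\bar 0}=\spf(V,\langle.,.\rangle)\oplus\inder(T)$, and passing to the compact real form through $\spf(1)\cong\suf(2)$ and $\hh^\CC=\inder(T)$ (from Proposition~\ref{pr_stsasociados}) gives $\hol(\nabla^c)\cong\suf(2)\oplus\hh$. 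The step I expect to be the most delicate is the separation argument in (iii) together with the faithfulness of $\ad\vert_{V\otimes T}$; the rest is bookkeeping with the formulas already in hand.
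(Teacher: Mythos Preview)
Your proof is correct and follows essentially the same route as the paper: show that the curvature operators span exactly $\ad(\g(T)_{\bar 0})\vert_{\mm^\CC}$ by first obtaining the $\spf(V)$-piece from $R^c(\xi,\xi')$ and then subtracting to isolate $\ad d_{x,y}$, and observe that this subalgebra is already closed under brackets with the only nonzero left multiplications $\alpha^c_\xi=-\ad\xi\vert_{\mm^\CC}$. Your argument is in fact more thorough than the paper's, since you explicitly verify the injectivity of $\ad\vert_{\mm^\CC}$ needed for the isomorphism $\ad(\g(T)_{\bar 0})\vert_{\mm^\CC}\cong\g(T)_{\bar 0}$, which the paper simply asserts.
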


\begin{proof}
In this case, the maps $\alpha^c_\xi=-\ad\xi\vert_{\mm^\CC}\in R^c(\spf(V,\langle.,.\rangle),\spf(V,\langle.,.\rangle))$ are obviously included in $\hol(\nabla^c)^\CC$, by the above proposition. Hence $
R^c(a\otimes x, b\otimes y)-(x,y)\ad \gamma_{a,b}=-\langle a,b\rangle\ad d_{x,y} \vert_{\mm^\CC}$ belongs to the holonomy algebra too and the whole $\ad(\g(T)_{\bar0})$ is a subalgebra of $\hol(\nabla^c)^\CC$. As this subalgebra is already closed for the bracket with the maps $\alpha^c_\xi=-\ad\xi$, then it has to coincide with $\hol(\nabla^c)^\CC$.
\end{proof}

Observe that $\hol(\nabla^\mathcal{S})\cong\hol(\nabla^c)$, so that both holonomy algebras are isomorphic Lie subalgebras of $\sof(n)$ but different.   The point is that $\nabla^c$ parallelizes the torsion while $\nabla^\mathcal{S}$ parallelizes the Reeb vector fields.

The holonomy algebras (and groups) are semisimple in the two considered cases,   distinguished and canonical, whenever $T$ is not of special type. But if $M=\frac{\SU(m)}{S(\mathrm{U}(m-2)\times \mathrm{U}(1))}$, then $\hh\cong\mathfrak{u}(m-2)$  and hence the holonomy algebras have a one-dimensional center. 

\begin{remark}
Note that the invariant affine connection related to the bilinear map $\alpha=0$ gives a holonomy algebra even smaller, since it can be proved to be isomorphic to $\hh$. But the geometric properties are not very good since, for instance, it does not have skew-symmetric torsion.
\end{remark}

\begin{remark}
Another invariant metric affine connection with skew-torsion emphasized in \cite{nues3Sas} is the characteristic connection $\nabla_{\tau}^{ch}$ related to any of the Sasakian structures $\{\xi_{\tau} ,\eta_{\tau}, \varphi_{\tau}\}$ (for some $\tau\in\mathbb S^2$). It is the only one satisfying   $\nabla^{ch}_{\tau} \xi_{\tau}=0, \nabla^{ch} _{\tau}\eta_{\tau}=0$ and $\nabla^{ch}_{\tau} \varphi_{\tau}=0$, and moreover, 
 it parallelizes the (skew-symmetric) torsion $T_{\tau}^{ch}$. In spite of that, we have not included  here the study of its holonomy algebra because the related curvature operators turn out to be very intrincate so that its holonomy algebra is not enough small.
\end{remark}

Here we enclose a table for comparing the dimensions of the obtained holonomy algebras for any (simply-connected)  3-Sasakian homogeneous manifold. \medskip 

\begin{center}
\hspace{-1.75cm}
\begin{tabular}{ c|| c | c| c | c |c| c | c | c |}
&$\tiny\frac{\Sp(n+1)}{\Sp(n)}$&$
\frac{\SU(m)}{S(\mathrm{U}(m-2)\times \mathrm{U}(1))}$&$
\frac{\SO(k )}{\SO(k-4)\times \Sp(1)}$&$
\frac{G_2 }{\Sp(1) }$&$  
\frac{F_4 }{\Sp(3) }$&$
\frac{E_6 }{\SU(6) }$&$
\frac{E_7 }{\mathrm{Spin}(12) }$
&$ \frac{ E_8}{ E_7}$\vspace{3pt}\\
{$n$}& &\scriptsize{$m-2$}&\scriptsize{$k-4$}&\scriptsize{2}&\scriptsize{7}&\scriptsize{10}&\scriptsize{16}&\scriptsize{28}\\\hline\hline
$\dim\hol(\nabla^g)$&$8n^2+10n+3$&$8n^2+10n+3$&$8n^2+10n+3$&55&465&903&2211&6555\\\hline
$\dim\hol(\nabla^\mathcal{S})$&$2n^2+n+3$&$n^2+3$&$\tiny\frac{n^2-n}2+6$&$6$&$24$&38&69&136\\\hline
$\dim\hol(\nabla^c)$&$2n^2+n+3$&$n^2+3$&$\tiny\frac{n^2-n}2+6$&$6$&$24$&38&69&136\\\hline\hline
\end{tabular}
\end{center}
\medskip 

Other possible comparisons, for instance the study of the Ricci tensor, are directly extracted from \cite[Proposition~5.2 and Corollary~5.4]{nues3Sas}.
The three Ricci tensors vanish in $\spf(1)\times\g_{\bar1}$, as the metric does, and 
$$
\begin{array}{ll}
\textrm{Ric}^g\vert_{\spf(1)\times \spf(1)}=(4n+2)g,\qquad\qquad&\textrm{Ric}^g\vert_{ \g_{\bar1}\times\g_{\bar1}}=(4n+2)g,\\
\textrm{Ric}^\mathcal{S}\vert_{\spf(1)\times \spf(1)}=0,\qquad&\textrm{Ric}^\mathcal{S}\vert_{ \g_{\bar1}\times\g_{\bar1}}=(4n-4)g,\\
\textrm{Ric}^c\vert_{\spf(1)\times \spf(1)}=-16g,\qquad&\textrm{Ric}^c\vert_{ \g_{\bar1}\times\g_{\bar1}}=(4n-4)g.
\end{array}
$$
In particular, $\nabla^\mathcal{S}$ and $\nabla^c$ are $\mathcal{S}$-Einstein invariant affine connections in the sense of \cite{nues3Sas}. 
Moreover, $\nabla^\mathcal S$ is Ricci-flat (but not flat) if $\dim M=7$, that is, if $M$ is either the sphere $\mathbb{S}^7$, or the projective space
$\RR P^7$, or the Aloff-Wallach space $\mathfrak{W}^{7}_{1,1}=\SU(3)/\mathrm U(1)$.

If we recall too that the scalar curvature is given by $(4n+2)(4n+3)-\frac32(a-\tr(B))^2-3n\|B\|^2$ for $B=(b_{rs})$ the matrix of the coefficients in Eq.~\eqref{eq_skewtor}, then, for $\dim M=7$,
$$
s^g=42;\quad
s^\mathcal{S}=0;\quad
s^c=-48;
$$
and, for arbitrary dimension, 
$$
s^g=(4n+2)(4n+3);\quad
s^\mathcal{S}=16n(n-1);\quad
s^c=16(n^2-n-3).
$$
Again it is stricking that these results only depend on the dimension of $M$, but not on the concrete  
homogeneous 3-Sasakian manifold we are working with.

\end{document}